\documentclass[11pt]{amsart}
\usepackage{amsfonts,amssymb,amscd,amsmath,enumerate,verbatim,calc,latexsym,pstcol,pst-plot,pst-3d}
\usepackage[noxcolor]{pstricks}
\input xy
\xyoption{all}


%
%
%

%
%
\def\frk{\mathfrak}               

\def\Phi{{\frk N}}
%
%


%
\def\opn#1#2{\def#1{\operatorname{#2}}} 
%
\opn\chara{char} \opn\length{\ell} \opn\pd{pd} \opn\rk{rk}
\opn\projdim{proj\,dim} \opn\injdim{inj\,dim} \opn\rank{rank}
\opn\depth{depth} \opn\grade{grade} \opn\height{height}
\opn\embdim{emb\,dim} \opn\codim{codim}

\opn\Tr{Tr} \opn\bigrank{big\,rank}
\opn\superheight{superheight}\opn\lcm{lcm}
\opn\trdeg{tr\,deg}
\opn\reg{reg} \opn\lreg{lreg} \opn\ini{in} \opn\lpd{lpd}
\opn\size{size}\opn{\mult}{mult}
\opn\link{lk}\opn\star{st}
%
\opn\div{div} \opn\Div{Div} \opn\cl{cl} \opn\Cl{Cl}
%
%
\opn\Spec{Spec} \opn\Supp{Supp} \opn\supp{supp} \opn\Sing{Sing}
\opn\Ass{Ass} \opn\Min{Min}
%
%
\opn\Ann{Ann} \opn\Rad{Rad} \opn\Soc{Soc}
%
%
\opn\Syz{Syz} \opn\Im{Im} \opn\Ker{Ker} \opn\Coker{Coker}
\opn\Am{Am} \opn\Hom{Hom} \opn\Tor{Tor} \opn\Ext{Ext}
\opn\End{End} \opn\Aut{Aut} \opn\id{id}

\opn\nat{nat}
\opn\pff{pf}
\opn\Pf{Pf} \opn\GL{GL} \opn\SL{SL} \opn\mod{mod} \opn\ord{ord}
\opn\Gin{Gin}
\opn\Hilb{Hilb}\opn\adeg{adeg}\opn\std{std}\opn\ip{infpt}
\opn\Pol{Pol}
\opn\sat{sat}
\opn\Var{Var}

%
%
\opn\aff{aff} \opn\con{conv} \opn\relint{relint} \opn\st{st}
\opn\lk{lk} \opn\cn{cn} \opn\core{core} \opn\vol{vol}
\opn\link{link} \opn\star{star} \opn\car{char}
\opn\gr{gr}


%
%

\def\pot#1#2{#1[\kern-0.28ex[#2]\kern-0.28ex]}

%
%
\opn\dirlim{\underrightarrow{\lim}}
\opn\inivlim{\underleftarrow{\lim}}
%
%
%

%
%

\def\Implies{\ifmmode\Longrightarrow \else
        \unskip${}\Longrightarrow{}$\ignorespaces\fi}
\def\implies{\ifmmode\Rightarrow \else
        \unskip${}\Rightarrow{}$\ignorespaces\fi}
\def\iff{\ifmmode\Longleftrightarrow \else
        \unskip${}\Longleftrightarrow{}$\ignorespaces\fi}

\let\:=\colon
\newtheorem{theorem}{Theorem}[section]
\newtheorem{theorem A}{Theorem}

\newtheorem{cor}[theorem]{Corollary}
\newtheorem{pro}[theorem]{Proposition}

\newtheorem{example}[theorem]{Example}

%
%
\let\epsilon\varepsilon
\let\phi=\varphi
\let\kappa=\varkappa
%
%
\textwidth=15cm \textheight=22cm \topmargin=0.5cm
\oddsidemargin=0.5cm \evensidemargin=0.5cm \pagestyle{plain}
%

\def\qed{\ifhmode\textqed\fi
      \ifmmode\ifinner\quad\qedsymbol\else\dispqed\fi\fi}
\def\textqed{\unskip\nobreak\penalty50
       \hskip2em\hbox{}\nobreak\hfil\qedsymbol
       \parfillskip=0pt \finalhyphendemerits=0}
\def\dispqed{\rlap{\qquad\qedsymbol}}

%
\opn\dis{dis}
\def\pnt{{\raise0.5mm\hbox{\large\bf.}}}

\opn\Lex{Lex}



\begin{document}

\title{Mostar index of graph operations}

\author{Shehnaz Akhter, Zahid Iqbal, Adnan Aslam and Wei Gao}

\address{ School of Natural Sciences, National University of Sciences and Technology, Sector H-12, Islamabad, Pakistan.
}
\email{shehnazsakhter36@yahoo.com;786zahidwarriach@gmail.com}

\address{Department of Natural Sciences and Humanities, University of Engineering and
Technology, Lahore, Pakistan (RCET), Pakistan} \email{adnanaslam15@yahoo.com}

\address{School of Information Science and Technology, Yunnan Normal University, Kunming, 650500, China.} \email{gaowei@ynnu.edu.cn}

\begin{abstract} Very recently, a bond-additive topological descriptor, known as the Mostar index, has been proposed as a measure of peripherality in graphs and networks. In this article, we compute the Mostar index of corona product, Cartesian product, join, lexicographic product, Indu-Bala product and subdivision vertex-edge join of graphs and apply these results to find the Mostar index of various classes of chemical graphs and nanostructures.
\end{abstract}

\subjclass{05C09, 05C92}
\keywords{Topological indices, Mostar index, molecular graph, graph operations.}

\maketitle

\section*{Introduction}

Research in mathematical chemistry provides a specific consideration to describe the distinctive nature of chemical structure and hence, one sometimes wishes to relate a unique quantitative value to every chemical compound.
In Mathematical Chemistry, one of the important problems is to analyze the distinctive nature of chemical structure with the help of structural invariants called topological descriptors.
The benefit of topological descriptors is that they may be applied directly as simple numerical descriptors for the correlation of chemical structure with various physical properties, biological activity or chemical reactivity in Quantitative Structure Activity Relationships (QSAR) and in Quantitative Structure-Property Relationships (QSPR)\cite {1,2}.
There are many graphs associated with numerical descriptors, which play a pivotal role in nanotechnology and theoretical chemistry.
Thereby, the assessment of these numerical descriptors is one of the famous lines of research.
The bond-additive topological descriptors are extensively used to describe the features of chemical graphs and their fragments, setting up the links between the structure and properties of molecules.
The first topological descriptor as a bond-additive index, known as Wiener index \cite{3} in which every bond yields a contribution that is equal to the product of the number of atoms on each side of the bond.
Inspired by the various successful topological descriptors of this type such as irregularity\cite{4}, Zagreb \cite{5}, PI \cite{6}, Szeged \cite{7}, revised-Szeged \cite{8,9,10}, and recently, another bond-additive topological descriptor, the Mostar index has been proposed by Do\v{s}lic et. al in \cite{11}.
This index gives information about the peripherality of individual bonds and then sums the inputs of all bonds into a global measure of peripherality for the given chemical graph.

Throughout this article, each graph will be a finite, undirected and simple.
Let $\mathcal{G}_l=(V(\mathcal{G}_l),E(\mathcal{G}_l))$ be a graph with the edge set $E(\mathcal{G}_l)$ and the vertex set $V(\mathcal{G}_l)$.
The cardinalities of vertex and edge sets of $\mathcal{G}_l$ are said to be the order and size of it respectively.
 A molecular graph is a graph whose vertices corresponds to atoms, and an edge between two vertices is related to the chemical bond between them.
The degree of a vertex $\mathfrak{u}_l$ of a graph $\mathcal{G}_l$ is represented by $d_{\mathcal{G}_l}(\mathfrak{u}_l)$, and it speaks the number of edges incident with $\mathfrak{u}_l.$

An edge of the graph is assumed to be peripheral if there are more vertices closer to one of its end-vertices as compare to the other one.
In other words, for an edge $\mathfrak{u}_l\mathfrak{u'}_l,$ the large value of the absolute difference of the number of vertices closer to $\mathfrak{u}_l$ than to $\mathfrak{u'}_l$ (denoted by $\mathrm{n}_{\mathfrak{u}_l}(\mathfrak{e}|\mathcal{G}_l)$) and the number of vertices closer to $\mathfrak{u'}_l$ than to $\mathfrak{u}_l$ (which we denote by $\mathrm{n}_{\mathfrak{u'}_l}(\mathfrak{e}|\mathcal{G}_l)$) expresses a peripheral position of $uv$ in $\mathcal{G}_l.$
The absolute difference $\left|\mathrm{n}_{\mathfrak{u}_l}(\mathfrak{e}|\mathcal{G}_l)-\mathrm{n}_{\mathfrak{u'}_l}
(\mathfrak{e}|\mathcal{G}_l)\right|$ said to be the contribution of the edge $\mathfrak{u}_l\mathfrak{u'}_l.$
The Mostar index of a graph $\mathcal{G}_l$ is described as the sum of such contributions over all edges of $\mathcal{G}_l$ as follows;

\begin{equation}\label{mostar}
Mo(\mathcal{G}_l)=\sum\limits_{\mathfrak{u}_l\mathfrak{u'}_l\in \mathsf{E}(\mathcal{G}_l)}\left|\mathrm{n}_{\mathfrak{u}_l}(\mathfrak{e}|\mathcal{G}_l)
-\mathrm{n}_{\mathfrak{u'}_l}(\mathfrak{e}|\mathcal{G}_l)\right|.
\end{equation}

Do\v{s}lic et. al compute the Mostar index of benzenoid systems by using a simple cut method in the same article. Furthermore, they also find the extremal values for trees and unicyclic graphs.
Later, the results of the Mostar index of bicyclic graphs were given in \cite{12}.
Tratnik showed that the Mostar index of a weighted graph can be determined in terms of Mostar indices of quotient graphs in \cite{13}.
Arockiaraj et. al presented the precise values of the Mostar index for the family of carbon nanocone and coronoid structures in reference \cite{14}.

The ~term irregularity of a graph $\mathcal{G}_l$ was first presented by Albertson~\cite{15}.
It is symbolized by $irr(\mathcal{G}_l)$ and described as follows:
\begin{equation}
irr(\mathcal{G}_l)=\sum\limits_{\mathfrak{u}_l\mathfrak{u'}_l\in \mathsf{E}(\mathcal{G}_l)}\left|\deg_{\mathcal{G}_l}({\mathfrak{u}_l})
-\deg_{\mathcal{G}_l}({\mathfrak{u'}_l})\right|.
\end{equation}

Abdo~et~al.~\cite{16} described the total irregularity measure of a graph $\mathcal{G}_l$, which was expressed as follows:
\begin{equation}
irr_t(\mathcal{G}_l)=\dfrac{1}{2}\sum\limits_{\mathfrak{u}_l,\mathfrak{u'}_l\in \mathsf{V}(\mathcal{G}_l)}\left|\deg_{\mathcal{G}_l}({\mathfrak{u}_l})
-\deg_{\mathcal{G}_l}({\mathfrak{u'}_l})\right|.
\end{equation}
For the detail discussions about the different graph invariants, we refer \cite{17,18,19,20,21,22,23,24,25,26,27,28,29}.
Now, we discuss some known results, that are heavily used in this paper. For the vertex-transitive graph $\mathcal{G}_l$, its Mostar index will be zero \cite{30}. Using this result, Do\v{s}li\'{c} et al. deduced the following result for the complete graph $\mathcal{K}_s$ of $s$ vertices, cyclic graph $\mathcal{C}_s$ of order $s$, path graph $\mathcal{P}_s$ on $s$ vertices, and for complete bipartite graph $\mathcal{K}_{r,s}$ with parts of sizes $r$ and $s.$

\begin{pro}\label{pro}\emph{\cite{30}}
 $Mo(\mathcal{K}_s)=Mo(\mathcal{C}_s)=Mo(\mathcal{K}_{s,s})=0$ and also $Mo(\mathcal{P}_s)=\left\lfloor\dfrac{(s-1)^2}{2}\right\rfloor$.
\end{pro}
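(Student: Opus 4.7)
The plan is to split the statement into the three vanishing cases $\mathcal{K}_s$, $\mathcal{C}_s$, $\mathcal{K}_{s,s}$, and the single explicit value $Mo(\mathcal{P}_s)$. The three vanishing cases follow immediately from the fact (already recalled in the excerpt and attributed to \cite{30}) that every vertex-transitive graph has vanishing Mostar index, since $\mathcal{K}_s$, $\mathcal{C}_s$, and $\mathcal{K}_{s,s}$ are all vertex-transitive. Alternatively, for each of these graphs one can verify by direct symmetry edge by edge that $n_u(e|\mathcal{G}_l)=n_v(e|\mathcal{G}_l)$: in $\mathcal{K}_s$ every non-endpoint vertex is equidistant to both endpoints; in $\mathcal{K}_{s,s}$ the $s-1$ remaining vertices of each part are closer to the endpoint in the opposite part, producing $n_u=n_v=s$; and for $\mathcal{C}_s$ the vertices on the two arcs pair up naturally, with at most one equidistant vertex in the odd-length case.

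For $\mathcal{P}_s$ with vertices $v_1, v_2, \dots, v_s$, the key observation is that removing the edge $e_i = v_iv_{i+1}$ disconnects $\mathcal{P}_s$ into subpaths of sizes $i$ and $s-i$, whose vertices are strictly closer to $v_i$ and $v_{i+1}$ respectively. Hence each edge contributes $|2i-s|$ to the Mostar index, and
\begin{equation*}
Mo(\mathcal{P}_s)=\sum_{i=1}^{s-1}|2i-s|.
\end{equation*}

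The remaining step is to evaluate this sum in closed form and identify it with $\lfloor(s-1)^2/2\rfloor$. This is the main (mildly technical) obstacle, and I expect to handle it by a case split on the parity of $s$: for $s=2m$ the summand equals $2|i-m|$ and the total collapses to $2m(m-1)$, while for $s=2m+1$ the summand sweeps through the odd values $1,3,\dots,2m-1$ twice, giving $2m^2$. A short calculation shows that both $2m(m-1)$ and $2m^2$ coincide with $\lfloor(s-1)^2/2\rfloor$ in the respective parities, so combining the two cases yields the claimed formula and completes the proof.
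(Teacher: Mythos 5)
Your proposal is correct and follows exactly the route the paper indicates: the three vanishing cases come from the cited fact that vertex-transitive graphs have zero Mostar index (which applies to $\mathcal{K}_s$, $\mathcal{C}_s$ and the balanced $\mathcal{K}_{s,s}$), and the path value follows from the edge-cut computation $Mo(\mathcal{P}_s)=\sum_{i=1}^{s-1}|2i-s|$, whose parity evaluation ($2m(m-1)$ for $s=2m$ and $2m^2$ for $s=2m+1$) indeed equals $\lfloor (s-1)^2/2\rfloor$ in both cases. The paper states this proposition without proof, attributing it to \cite{30}, so your argument simply supplies the details that the paper delegates to that reference.
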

For a simple undirected graph $\mathcal{G}_l$ with $s$ vertices, Abdo et al. gave the following interesting bound for the total irregularity index.
\begin{pro}\label{pro1}\emph{\cite{16}}
\begin{eqnarray*}
\begin{split}
irr_t(\mathcal{G}_l)&\leq\left\{
\begin{array}{ll}
\dfrac{1}{12}(2s^3-3s^2-2s),    & \mbox{if $s$ is even,} \\\\
\dfrac{1}{12}(2s^3-3s^2-2s+3),  & \mbox{if $s$ is odd,}
\end{array}
\right.
\end{split}
\end{eqnarray*}
\end{pro}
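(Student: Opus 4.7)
The plan is to reduce the proposition to an extremal question about sorted degree sequences. Order the vertices of $\mathcal{G}_l$ so that their degrees satisfy $d_1 \geq d_2 \geq \cdots \geq d_s$. A standard rearrangement of the definition gives
$$irr_t(\mathcal{G}_l) = \sum_{1 \leq i < j \leq s}(d_i - d_j) = \sum_{i=1}^s (s - 2i + 1)\, d_i.$$
Since the coefficient $c_i := s - 2i + 1$ is positive for $i \leq \lfloor s/2 \rfloor$, zero when $s$ is odd and $i = (s+1)/2$, and negative for $i \geq \lceil s/2 \rceil + 1$, the extremum is attained by pushing the top-ranked degrees toward $s-1$ and the bottom-ranked ones toward $0$, subject to $(d_1,\ldots,d_s)$ being a graphic sequence.

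The proof then proceeds in two stages. First, exhibit a threshold graph realizing the claimed value: for small cases the degree sequences $(3,1,1,1)$, $(4,2,2,1,1)$, $(5,3,3,3,1,1)$ meet the bound with equality for $s = 4,5,6$, and the analogous ``universal vertex + dense middle block + two pendants'' construction generalizes to arbitrary $s$, as a direct substitution into the displayed formula verifies. Second, establish the matching upper bound. The trivial estimate $0 \leq d_i \leq s-1$ alone only yields $irr_t(\mathcal{G}_l) \leq (s-1)\lfloor s^2/4\rfloor$, which is strictly larger than the claim, so one must invoke the Erd\H{o}s--Gallai realizability condition
$$\sum_{i=1}^k d_i \leq k(k-1) + \sum_{i>k}\min(d_i,k), \qquad k = 1, 2, \ldots, s,$$
which in particular forbids having simultaneously many near-universal and many near-isolated vertices (for instance, $d_1 = s-1$ forces $d_s \geq 1$). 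Applying this inequality at the cut index $k = \lfloor s/2 \rfloor$ and combining with the algebraic identity above produces, after a parity split, the two cubic expressions of the proposition.

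The main obstacle is the arithmetic bookkeeping in the upper-bound step: the Erd\H{o}s--Gallai inequality must be invoked sharply enough to recover the exact leading coefficient $s^3/6$ rather than the loose $s^3/4$ coming from the trivial estimate, and all lower-order terms must be tracked carefully through the parity split. The correction $+3$ in the odd-$s$ case reflects the vanishing of $c_{(s+1)/2}$, which leaves one middle-ranked degree unconstrained by the objective functional and thereby slightly relaxes the optimization compared to the even case.
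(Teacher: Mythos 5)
First, be aware that the paper contains no proof of Proposition \ref{pro1}: it is imported verbatim from \cite{16} (Abdo, Brandt and Dimitrov) and used as a black box, so there is no in-paper argument to compare yours against and your attempt must stand on its own. Your opening reduction is correct and is indeed the standard starting point: with degrees sorted descendingly, $irr_t(\mathcal{G}_l)=\sum_{i<j}(d_i-d_j)=\sum_{i=1}^{s}(s-2i+1)d_i$, and your candidate extremal sequences $(3,1,1,1)$, $(4,2,2,1,1)$, $(5,3,3,3,1,1)$ are graphic and evaluate to $6$, $14$, $26$, matching the bound for $s=4,5,6$. But those constructions only certify tightness; the proposition is an upper bound, and that is where your argument has a genuine gap.

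The gap is quantifiable: a \emph{single} Erd\H{o}s--Gallai inequality at $k=\lfloor s/2\rfloor$, combined with $0\le d_s\le\cdots\le d_1\le s-1$, provably cannot produce the leading coefficient $\tfrac16$. Take $s=2m$ and the sequence $d_1=\cdots=d_j=2m-1$, $d_{j+1}=\cdots=d_{2m}=0$ with $j=\bigl\lfloor m(m-1)/(2m-1)\bigr\rfloor\sim m/2$: it is monotone, respects $0\le d_i\le s-1$, and satisfies the $k=m$ Erd\H{o}s--Gallai inequality since $\sum_{i\le m}d_i=j(2m-1)\le m(m-1)$, yet the functional evaluates to $(2m-1)\,j(2m-j)\approx\tfrac{3}{16}s^3$, which exceeds the claimed $\tfrac16 s^3$. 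So the relaxation you describe has optimum about $\tfrac{3}{16}s^3$, not the stated value; no amount of ``arithmetic bookkeeping'' at that one cut index recovers the proposition. (You do note in passing that $d_1=s-1$ forces $d_s\ge1$, i.e.\ the $k=1$ instance, but your stated derivation does not use it.) Closing the gap requires the full family of Erd\H{o}s--Gallai constraints --- equivalently, the fact that the degree-sequence polytope has threshold sequences as its vertices, which reduces the maximization of the linear functional $\sum_i(s-2i+1)d_i$ to an explicit optimization over threshold graphs --- and carrying out that optimization, with the parity split emerging from the vanishing middle coefficient, is the actual content of the proof. As it stands, the upper-bound half of your argument, which is the whole assertion of the proposition, is not established.
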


\section{Main result}
In this section, we derive the expressions for the Mostar index of different graph operations. First, we compute the Mostar index of corona product of two graphs.
\subsection{Corona Product}
Let $\mathcal{G}_l$ and $\mathcal{G}_m$ be two graphs with order $s_1$ and $s_2$, and size $t_1$ and $t_2$, respectively.
The corona product $\mathcal{G}_l\circ \mathcal{G}_m$ of graphs $\mathcal{G}_l$ and $\mathcal{G}_m$ is a graph, which can be drawn by using a copy of $\mathcal{G}_l$ and $s_1$ copies of $\mathcal{G}_m$ and linking the $g$-th vertex of $\mathcal{G}_l$ to every vertex in $g$-th copy of $\mathcal{G}_m$, $1\leq g\leq s_1$.
In the following theorem, we give the expression of the Mostar index of corona product of two graphs.
Here, the number of triangles which consist of an edge $\mathfrak{e}=\mathfrak{u}_l\mathfrak{u'}_l$ in $\mathcal{G}_l$ is denoted by $t_{\mathfrak{u}_l\mathfrak{u'}_l}$.
\begin{theorem}\label{theoremcor}
Let $\mathcal{G}_l$ and $\mathcal{G}_m$ be the two graphs. Then
\begin{eqnarray*}
\begin{split}
Mo(\mathcal{G}_l\circ \mathcal{G}_m)&\leq s_1 irr(\mathcal{G}_t)+(s_2+1)Mo(\mathcal{G}_l)+s_1s_2\left|2-s_1-s_1s_2\right|+2s_1t_2.
\end{split}
\end{eqnarray*}
\end{theorem}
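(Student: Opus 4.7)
The plan is to split $\mathsf{E}(\mathcal{G}_l\circ \mathcal{G}_m)$ into three natural edge classes and compute the Mostar contribution of each class separately; only the last class will produce an inequality. Let the vertices of $\mathcal{G}_l$ be $u_1,\dots ,u_{s_1}$, write $V_g$ for the vertex set of the $g$-th copy of $\mathcal{G}_m$, and note $|V(\mathcal{G}_l\circ \mathcal{G}_m)|=s_1(s_2+1)$. The three classes are (i) the edges of the base copy of $\mathcal{G}_l$, (ii) the edges inside each of the $s_1$ copies of $\mathcal{G}_m$, and (iii) the pendant edges $u_g v$ with $v\in V_g$. Before touching any contribution I would record the elementary distance facts that drive everything: the corona product introduces no shortcuts between vertices of $\mathcal{G}_l$, so $d_{\mathcal{G}_l\circ \mathcal{G}_m}(u_g,u_h)=d_{\mathcal{G}_l}(u_g,u_h)$; for $x\in V_k$ one has $d(x,u_g)=1+d_{\mathcal{G}_l}(u_k,u_g)$; and any two vertices of the same copy $V_g$ sit at distance at most $2$ because the hub $u_g$ is adjacent to all of them.

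For a class-(i) edge $e=u_g u_h$, every vertex of $V_k\cup \{u_k\}$ has the same closer endpoint as $u_k$ in $\mathcal{G}_l$ (and is equidistant exactly when $u_k$ is), while the whole of $V_g$ (resp.\ $V_h$) lies strictly closer to $u_g$ (resp.\ $u_h$). Hence $\mathrm{n}_{u_g}(e\mid \mathcal{G}_l\circ \mathcal{G}_m)=(s_2+1)\,\mathrm{n}_{u_g}(e\mid \mathcal{G}_l)$ and similarly for $u_h$, so the class-(i) total equals $(s_2+1)\,Mo(\mathcal{G}_l)$. For a class-(ii) edge $e=vw$ inside $V_g$, $u_g$ is equidistant, and every vertex outside $V_g$ is equidistant as well because its geodesics to $v$ and to $w$ both pass through $u_g$; therefore only vertices of $V_g$ matter. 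A vertex $x\in V_g\setminus \{v,w\}$ is strictly closer to $v$ iff $x\sim v$ and $x\not\sim w$ in $\mathcal{G}_m$, and the triangle count $t_{vw}$ yields $\mathrm{n}_v(e)=d_{\mathcal{G}_m}(v)-t_{vw}$ and $\mathrm{n}_w(e)=d_{\mathcal{G}_m}(w)-t_{vw}$. Thus each class-(ii) edge contributes exactly $|d_{\mathcal{G}_m}(v)-d_{\mathcal{G}_m}(w)|$, summing to $s_1\,irr(\mathcal{G}_m)$.

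For a class-(iii) edge $e=u_g v$ with $v\in V_g$, only $v$ itself is strictly closer to $v$: each neighbor of $v$ inside $V_g$ is also at distance $1$ from $u_g$ and is therefore equidistant, and every vertex outside $V_g\cup \{u_g\}$ reaches $v$ only via $u_g$ and so is strictly closer to $u_g$. Consequently
\[
|\mathrm{n}_{u_g}(e)-\mathrm{n}_v(e)|=|s_1(s_2+1)-2-d_{\mathcal{G}_m}(v)|,
\]
and the triangle inequality $|A-B|\le |A|+|B|$ yields the per-edge bound $|2-s_1-s_1s_2|+d_{\mathcal{G}_m}(v)$. Summing over the $s_1s_2$ pendant edges produces $s_1s_2|2-s_1-s_1s_2|+2s_1 t_2$, and adding the three class totals delivers the stated bound.

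The step that demands the most care is class (iii): one has to verify carefully that every neighbor of $v$ in $\mathcal{G}_m$ is equidistant from $v$ and $u_g$ (each at distance $1$), and that no vertex of another copy $V_h$ or of $\mathcal{G}_l$ can reach $v$ on a shorter route than through $u_g$. After this verification the single triangle-inequality upper bound used in class (iii) is the only inequality in the whole argument, which is precisely the reason the theorem is stated as an inequality rather than an exact equality.
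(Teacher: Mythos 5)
Your proof is correct and follows essentially the same route as the paper: the identical three-way split of the edge set, the same identities $\mathrm{n}_{v}(e)=\deg_{\mathcal{G}_m}(v)-t_{vw}$ for copy edges and $(s_2+1)\mathrm{n}_{u}(e\mid\mathcal{G}_l)$ for base edges, and a single triangle inequality on the pendant-edge class. Your bookkeeping of the pendant-edge sum in fact lands cleanly on the stated term $2s_1t_2$, whereas the paper's displayed intermediate lines contain typos ($2s_2t_1$ and $2s_1t_12$); the substance is the same.
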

\begin{proof}
Using the definition of corona product in equation \eqref{mostar}
\begin{eqnarray}\label{cor1}
\begin{split}
Mo(\mathcal{G}_l\circ \mathcal{G}_m)&=s_1\sum\limits_{\mathfrak{e}=\mathfrak{u}_m\mathfrak{u'}_m\in \mathsf{E}(\mathcal{G}_m)}\left|\mathrm{n}_{\mathfrak{u}_m}(\mathfrak{e}|\mathcal{G}_l\circ \mathcal{G}_m)
-\mathrm{n}_{\mathfrak{u'}_m}(\mathfrak{e}|\mathcal{G}_l\circ \mathcal{G}_m)\right|
\\
&+\sum\limits_{\mathfrak{e}=\mathfrak{u}_l\mathfrak{u'}_l\in \mathsf{E}(\mathcal{G}_l)}\left|\mathrm{n}_{\mathfrak{u}_l}(\mathfrak{e}|\mathcal{G}_l\circ \mathcal{G}_m)-\mathrm{n}_{\mathfrak{u'}_l}(\mathfrak{e}|\mathcal{G}_l\circ \mathcal{G}_m)\right|
\\
&+\sum\limits_{\mathfrak{u}_l\in \mathsf{V}(\mathcal{G}_l)}\sum\limits_{\mathfrak{u}_m\in \mathsf{V}(\mathcal{G}_m)}\left|\mathrm{n}_{\mathfrak{u}_l}(\mathfrak{e}|\mathcal{G}_l\circ \mathcal{G}_m)-\mathrm{n}_{\mathfrak{u}_m}(\mathfrak{e}|\mathcal{G}_l\circ \mathcal{G}_m)\right|
\end{split}
\end{eqnarray}

For every $\mathfrak{e}=\mathfrak{u}_m\mathfrak{u'}_m\in \mathsf{E}(\mathcal{G}_m)$ if there exists $\mathfrak{u''}_m\in \mathsf{V}(\mathcal{G}_m)$ such that $\mathfrak{u}_m\mathfrak{u''}_m\notin \mathsf{E}(\mathcal{G}_m)$ and $\mathfrak{u'}_m\mathfrak{u''}_m\notin \mathsf{E}(\mathcal{G}_m)$ then $d_{\mathcal{G}_l\circ \mathcal{G}_m}(\mathfrak{u}_m,\mathfrak{u''}_m)=d_{\mathcal{G}_l\circ \mathcal{G}_m}(\mathfrak{u}_m,\mathfrak{u''}_m)=2$ and if $\mathfrak{u}_m\mathfrak{u''}_m\in \mathsf{E}(\mathcal{G}_m)$ and $\mathfrak{u'}_m\mathfrak{u''}_m\in \mathsf{E}(\mathcal{G}_m)$ then $d_{\mathcal{G}_l\circ \mathcal{G}_m}(\mathfrak{u}_m,\mathfrak{u''}_m)=d_{\mathcal{G}_l\circ \mathcal{G}_m}(\mathfrak{u'}_m,\mathfrak{u''}_m)=1$.
Hence $\mathrm{n}_{\mathfrak{u}_m}(\mathfrak{e}|\mathcal{G}_l\circ \mathcal{G}_m)=\deg_{\mathcal{G}_m}(\mathfrak{u}_m)-t_{\mathfrak{u}_m\mathfrak{u'}_m}$ and
\begin{eqnarray}\label{cor2}
\begin{split}
&\sum\limits_{\mathfrak{e}=\mathfrak{u}_m\mathfrak{u'}_m\in \mathsf{E}(\mathcal{G}_m)}\left|\mathrm{n}_{\mathfrak{u}_m}(\mathfrak{e}|\mathcal{G}_l\circ \mathcal{G}_m)-\mathrm{n}_{\mathfrak{u'}_m}(\mathfrak{e}|\mathcal{G}_l\circ \mathcal{G}_m)\right|
\\
=&\sum\limits_{\mathfrak{e}=\mathfrak{u}_m\mathfrak{u'}_m\in \mathsf{E}(\mathcal{G}_m)}\left|\deg_{\mathcal{G}_m}(\mathfrak{u}_m)-t_{\mathfrak{u}_m\mathfrak{u'}_m}
-\deg_{\mathcal{G}_m}(\mathfrak{u'}_m)+t_{\mathfrak{u}_m\mathfrak{u'}_m}\right|
\\
=&\sum\limits_{\mathfrak{e}=\mathfrak{u}_m\mathfrak{u'}_m\in \mathsf{E}(\mathcal{G}_m)}\left|\deg_{\mathcal{G}_m}(\mathfrak{u}_m)-\deg_{\mathcal{G}_m}(\mathfrak{u'}_m)\right|
\\
=&irr(\mathcal{G}_m).
\end{split}
\end{eqnarray}

We now assume that $\mathfrak{e}=\mathfrak{u}_l\mathfrak{u'}_l\in \mathsf{E}(\mathcal{G}_l)$.
Then for each vertex $\mathfrak{u''}_l$ closer to $\mathfrak{u}_l$ than $\mathfrak{u'}_l$, the vertices of the copy of $\mathcal{G}_m$ attached to $\mathfrak{u''}_l$ are also closer to $\mathfrak{u}_l$ than $\mathfrak{u'}_l$.
Since each copy of $\mathcal{G}_m$ has exactly $s_2$ vertices, $\mathrm{n}_{\mathfrak{u}_l}(\mathfrak{e}|\mathcal{G}_l\circ \mathcal{G}_m)=(s_2+1)\mathrm{n}_{\mathfrak{u}_l}(\mathfrak{e}|\mathcal{G}_l)$.
Similarly, $\mathrm{n}_{\mathfrak{u'}_l}(\mathfrak{e}|\mathcal{G}_l\circ \mathcal{G}_m)=(s_2+1)\mathrm{n}_{\mathfrak{u'}_l}(\mathfrak{e}|\mathcal{G}_l)$.
Therefore, we have
\begin{eqnarray}\label{cor3}
\begin{split}
&\sum\limits_{\mathfrak{e}=\mathfrak{u}_l\mathfrak{u'}_l\in \mathsf{E}(\mathcal{G}_l)}\left|\mathrm{n}_{\mathfrak{u}_l}(\mathfrak{e}|\mathcal{G}_l\circ \mathcal{G}_l)-\mathrm{n}_{\mathfrak{u}_l}(\mathfrak{e}|\mathcal{G}_l\circ \mathcal{G}_l)\right|
\\
=&\sum\limits_{\mathfrak{e}=\mathfrak{u}_l\mathfrak{u}_l\in \mathsf{E}(\mathcal{G}_l)}\left|(s_2+1)\mathrm{n}_{\mathfrak{u}_l}(\mathfrak{e}|\mathcal{G}_l)
-(s_2+1)\mathrm{n}_{\mathfrak{u}_l}(\mathfrak{e}|\mathcal{G}_l)\right|
\\
=&(s_2+1)\sum\limits_{\mathfrak{e}=\mathfrak{u}_l\mathfrak{u}_l\in \mathsf{E}(\mathcal{G}_l)}\left|\mathrm{n}_{\mathfrak{u}_l}(\mathfrak{e}|\mathcal{G}_l)
-\mathrm{n}_{\mathfrak{u}_l}(\mathfrak{e}|\mathcal{G}_l)\right|
\\
=&(s_2+1)Mo(\mathcal{G}_l).
\end{split}
\end{eqnarray}

Finally, we assume that $\mathfrak{e}=\mathfrak{u}_l\mathfrak{u}_m$ with $\mathfrak{u}_m\in \mathsf{V}(\mathcal{G}_m)$ and $\mathfrak{u}_l\in \mathsf{V}(\mathcal{G}_l)$, $\deg_{\mathcal{G}_m}(\mathfrak{u}_m)=k$ and $\{\mathfrak{u}_{m_1},\dots,\mathfrak{u}_{m_k}\}$ are adjacent vertices of $\mathfrak{u}_m\in \mathcal{G}_{m_i}$.
By definition of corona product of graphs, $\mathfrak{u}_l$ is adjacent to vertices $\mathfrak{u}_{m_1},\dots,\mathfrak{u}_{m_k}$.
Thus for each $j$, $1\leq j\leq k$, $\mathfrak{u}_{m_j}$ is equidistant from $\mathfrak{u}_l$ and $\mathfrak{u}_m$.
On the other hand, every vertex of $\mathcal{G}_l\circ \mathcal{G}_m$ other than $\mathfrak{u}_m, \mathfrak{u}_{m_1},\dots,\mathfrak{u}_{m_k}$ are closer to $\mathfrak{u}_l$ than $\mathfrak{u}_m$.
This implies that $\mathrm{n}_{\mathfrak{u}_m}(\mathfrak{e}|\mathcal{G}_l\circ \mathcal{G}_m)=|\mathsf{V}(\mathcal{G}_l\circ \mathcal{G}_m)|-(\deg_{\mathcal{G}_m}(\mathfrak{u}_m)+1)$ and $\mathrm{n}_{\mathfrak{u}_l}(\mathfrak{e}|\mathcal{G}_l\circ \mathcal{G}_m)=1$.
Therefore we have
\begin{eqnarray}\label{cor4}
\begin{split}
&\sum\limits_{\mathfrak{u}_l\in \mathsf{V}(\mathcal{G}_l)}\sum\limits_{\mathfrak{u}_m\in \mathsf{V}(\mathcal{G}_m)}\left|\mathrm{n}_{\mathfrak{u}_l}(\mathfrak{e}|\mathcal{G}_l\circ \mathcal{G}_m)-\mathrm{n}_{\mathfrak{u}_m}(\mathfrak{e}|\mathcal{G}_l\circ \mathcal{G}_m)\right|
\\
=&\sum\limits_{\mathfrak{u}_l\in \mathsf{V}(\mathcal{G}_l)}\sum\limits_{\mathfrak{u}_m\in \mathsf{V}(\mathcal{G}_m)}\left|1-|\mathsf{V}(\mathcal{G}_l\circ \mathcal{G}_m)|+(\deg_{\mathcal{G}_m}(\mathfrak{u}_m)+1)\right|
\\
\leq&\sum\limits_{\mathfrak{u}_m\in \mathsf{V}(\mathcal{G}_l)}\sum\limits_{\mathfrak{u}_l\in \mathsf{V}(\mathcal{G}_m)}\left|2-|\mathsf{V}(\mathcal{G}_l\circ \mathcal{G}_m)|\right|
+\sum\limits_{\mathfrak{u}_m\in \mathsf{V}(\mathcal{G}_l)}\sum\limits_{\mathfrak{u}_l\in \mathsf{V}(\mathcal{G}_m)}\deg_{\mathcal{G}_m}(\mathfrak{u}_m)
\\
=&s_1s_2\left|2-s_1-s_1s_2\right|+2s_2t_1.
\end{split}
\end{eqnarray}
By using results \eqref{cor2}-\eqref{cor4} in \eqref{cor1}, we acquire
\begin{eqnarray*}
\begin{split}
Mo(\mathcal{G}_l\circ \mathcal{G}_m)&\leq s_1 irr(\mathcal{G}_m)+(s_2+1)Mo(\mathcal{G}_l)+s_1s_2\left|2-s_1-s_1s_2\right|+2s_1t_12.
\end{split}
\end{eqnarray*}
This completes the proof.
\end{proof}

For $\mathcal{G}_l$, the $g$-thorny graph is obtained by joining $g$-number of pendant vertices to each vertex of $\mathcal{G}_l$ and it is recognized by $\mathcal{G}_{l}^g$.
The $g$-thorny graph of $\mathcal{G}_l$ is represented as $\mathcal{G}_l\circ \overline{\mathcal{K}}_m$.
Thus from Theorem \ref{theoremcor}, the Corollary \ref{cor1} follows.
\begin{cor}\label{cor1}
If $\mathcal{G}_l$ is a graph with $|\mathsf{E}(\mathcal{G}_l)|=t$ and $|\mathsf{V}(\mathcal{G}_l)|=s$.
Then
\begin{equation*}
Mo(\mathcal{G}_l\circ \overline{\mathcal{K}}_m)\leq(m+1)Mo(\mathcal{G}_l)+sm|2-s-sm|.
\end{equation*}
\end{cor}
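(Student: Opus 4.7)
The plan is to derive Corollary \ref{cor1} as an immediate specialization of Theorem \ref{theoremcor}, taking $\mathcal{G}_m$ to be the empty graph $\overline{\mathcal{K}}_m$ on $m$ vertices. The justification that the $g$-thorny graph coincides with $\mathcal{G}_l\circ\overline{\mathcal{K}}_m$ has already been observed in the paragraph preceding the corollary (the corona product attaches $m$ isolated vertices to each vertex of $\mathcal{G}_l$, which produces precisely $m$ pendant vertices at every vertex).

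Next I would match the parameters of Theorem \ref{theoremcor} to this setting. With $\mathcal{G}_m=\overline{\mathcal{K}}_m$ we have $s_1=s$, $s_2=m$, $t_1=t$, and critically $t_2=|\mathsf{E}(\overline{\mathcal{K}}_m)|=0$, so the last term $2s_1 t_2$ in the theorem vanishes. Moreover, since $\overline{\mathcal{K}}_m$ has no edges, the sum defining $irr(\overline{\mathcal{K}}_m)$ is empty, hence $irr(\overline{\mathcal{K}}_m)=0$, and the first term $s_1\,irr(\mathcal{G}_m)$ also vanishes.

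After these two simplifications the bound of Theorem \ref{theoremcor} collapses to
\begin{equation*}
Mo(\mathcal{G}_l\circ\overline{\mathcal{K}}_m)\leq (m+1)\,Mo(\mathcal{G}_l)+sm\,|2-s-sm|,
\end{equation*}
which is exactly the claim. There is no genuine obstacle here, as the argument is a direct specialization; the only item deserving a sentence of care is verifying that the conditions under which Theorem \ref{theoremcor} is stated (namely that $\mathcal{G}_l$ and $\mathcal{G}_m$ are simple graphs) are met by $\overline{\mathcal{K}}_m$, which is immediate.
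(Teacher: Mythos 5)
Your proposal is correct and matches the paper exactly: the paper likewise obtains the corollary as a direct specialization of Theorem \ref{theoremcor} with $\mathcal{G}_m=\overline{\mathcal{K}}_m$, where $t_2=0$ and $irr(\overline{\mathcal{K}}_m)=0$ kill the first and last terms of the bound. Your write-up actually spells out these two simplifications more explicitly than the paper, which simply asserts that the corollary follows.
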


\begin{example}
The bottleneck graph $\mathsf{B}$ of $\mathcal{G}_l$ is obtained by taking the corona product of $\mathcal{K}_2$ with $\mathcal{G}_l$.
Then $Mo(\mathcal{K}_2\circ \mathcal{G}_l)=2irr(\mathcal{G}_l)+4(s+t)$, where $|\mathsf{V}(\mathcal{G}_l)|=s$ and $|\mathsf{E}(\mathcal{G}_l)|=t$.
\end{example}
\begin{example}
For the vertices $\mathfrak{a}_l$, $1\leq l\leq s$, the structure of a bridge graph can be acquired by linking the vertices $\mathfrak{a}_l$ and $\mathfrak{a}_{l+1}$ of $\mathcal{A}_{l+1}$ by a connection for all $l=1,2,\dots,s-1$ and it is denoted by $B(\mathcal{A}_1,\mathcal{A}_2,\dots,\mathcal{A}_s;\mathfrak{a}_1,\mathfrak{a}_2,\dots,\mathfrak{a}_s)$.
For $\mathcal{A}_1=\mathcal{A}_2=\dots=\mathcal{A}_n$ and $\mathfrak{a}_1=\mathfrak{a}_2=\dots=\mathfrak{a}_s=\mathfrak{a}$, we can describe $\mathcal{A}_n(\mathcal{A},\mathfrak{a})=B(\mathcal{A},\mathcal{A},\dots,\mathcal{A};\mathfrak{a},\mathfrak{a},
\dots,\mathfrak{a})$.
In particular, let $B_n=\mathcal{A}_n(\mathcal{P}_3,\mathfrak{a})$, with $\deg_{\mathcal{P}_3}(\mathfrak{a})=2$ and $T_{n,k}=\mathcal{A}_n(\mathcal{C}_k,\mathfrak{a})$ be the types of bridge graphs.
Then we have $B_k=\mathcal{P}_k\circ \overline{\mathcal{P}}_2$, $T_{k,3}=\mathcal{P}_k\circ \mathcal{P}_2$ and $J_{j,k+1}= \mathcal{P}_j\circ \mathcal{C}_k$.

By using Theorem \ref{theoremcor}, we have the next results:
\begin{enumerate}
  \item $Mo(B_k)=3\left\lfloor\dfrac{(k-1)^2}{2}\right\rfloor+2k(3k-1).$
  \item $Mo(T_{k,3})=3\left\lfloor\dfrac{(k-1)^2}{2}\right\rfloor+2k(3k-1).$
  \item $Mo(J_{j,k+1})=(k+1)\left\lfloor\dfrac{(j-1)^2}{2}\right\rfloor+jk\left|2-j-jk\right|+2jk.$
\end{enumerate}
\end{example}
\subsection{Cartesian Product}
Here we denote the Cartesian product of $\mathcal{G}_l$ and $\mathcal{G}_m$ graphs with $\mathcal{G}_l\otimes \mathcal{G}_m,$ it has $\mathsf{V}(\mathcal{G}_l)\times \mathsf{V}(\mathcal{G}_m)$ vertex set and $(\mathfrak{u}_l,\mathfrak{u}_m)(\mathfrak{u'}_l,\mathfrak{u'}_m)\in \mathsf{E}(\mathcal{G}_l\otimes \mathcal{G}_m)$ if $\mathfrak{u}_l=\mathfrak{u'}_l$ and $\mathfrak{u}_m\mathfrak{u'}_m\in \mathsf{E}(\mathcal{G}_m)$, or $\mathfrak{u}_l\mathfrak{u'}_l \in \mathsf{E}(\mathcal{G}_l)$ and $\mathfrak{u}_m=\mathfrak{u'}_m$.

Now, we give the expression for Mostar index of $\mathcal{G}_{l_1}\otimes \mathcal{G}_{l_2}\otimes\dots\otimes G_{l_k}$ in the form of factor graphs.
\begin{theorem}\label{theoremcar}
Let $\mathcal{G}_{l_1},\mathcal{G}_{l_2},\dots,\mathcal{G}_{l_k}$ be graphs with $|\mathsf{V}(\mathcal{G}_{l_m})|=s_m$, $|\mathsf{E}(\mathcal{G}_{l_m})|=t_m$, $1\leq m\leq k$, and $s=\prod\limits_{m=1}^{k}s_m$.
Then we have
\begin{eqnarray*}
\begin{split}
Mo(\mathcal{G}_{l_1}\otimes\mathcal{G}_{l_2}\dots\mathcal{G}_{l_k})&= \sum\limits_{i=1}^{k+1}Mo(\mathcal{G}_{l_i}) \prod\limits_{j=1,i\neq j}^{k}s_j^2.
\end{split}
\end{eqnarray*}
\end{theorem}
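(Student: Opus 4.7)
The plan is to exploit the well-known distance formula for the Cartesian product,
\[
d_{\mathcal{G}_{l_1}\otimes\cdots\otimes\mathcal{G}_{l_k}}\bigl((x_1,\dots,x_k),(y_1,\dots,y_k)\bigr)=\sum_{i=1}^{k}d_{\mathcal{G}_{l_i}}(x_i,y_i),
\]
and then to partition the edge set of the product according to the unique coordinate in which the two endpoints differ. Every edge $\mathfrak{e}\in\mathsf{E}(\mathcal{G}_{l_1}\otimes\cdots\otimes\mathcal{G}_{l_k})$ has the form $\mathfrak{e}=((x_1,\dots,x_{i-1},u_i,x_{i+1},\dots,x_k),(x_1,\dots,x_{i-1},v_i,x_{i+1},\dots,x_k))$ for a unique index $i$ and a unique edge $\mathfrak{e}_i=u_iv_i\in\mathsf{E}(\mathcal{G}_{l_i})$, so I would first write $Mo(\mathcal{G}_{l_1}\otimes\cdots\otimes\mathcal{G}_{l_k})$ as a sum of $k$ blocks, one per index $i$.

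For a fixed $i$ and a fixed edge $\mathfrak{e}$ of the above form, a vertex $(w_1,\dots,w_k)$ is strictly closer to the $u_i$-endpoint than to the $v_i$-endpoint if and only if $d_{\mathcal{G}_{l_i}}(w_i,u_i)<d_{\mathcal{G}_{l_i}}(w_i,v_i)$, because the contributions of all other coordinates cancel in the difference of distances. Consequently
\[
\mathrm{n}_{u\text{-end}}(\mathfrak{e}\mid \mathcal{G}_{l_1}\otimes\cdots\otimes\mathcal{G}_{l_k})=\mathrm{n}_{u_i}(\mathfrak{e}_i\mid\mathcal{G}_{l_i})\prod_{j\neq i}s_j,
\]
and analogously for the $v_i$-endpoint. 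Subtracting and taking absolute values, the edge $\mathfrak{e}$ contributes exactly $\bigl(\prod_{j\neq i}s_j\bigr)\bigl|\mathrm{n}_{u_i}(\mathfrak{e}_i\mid\mathcal{G}_{l_i})-\mathrm{n}_{v_i}(\mathfrak{e}_i\mid\mathcal{G}_{l_i})\bigr|$.

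To finish, I would count: for each edge $\mathfrak{e}_i\in\mathsf{E}(\mathcal{G}_{l_i})$, the number of edges of the product sitting above $\mathfrak{e}_i$ equals the number of choices for the fixed coordinates $(x_j)_{j\neq i}$, which is $\prod_{j\neq i}s_j$. Summing the contribution displayed above over all these lifts and then over all $\mathfrak{e}_i\in\mathsf{E}(\mathcal{G}_{l_i})$ yields $\bigl(\prod_{j\neq i}s_j^{\,2}\bigr)Mo(\mathcal{G}_{l_i})$ for block $i$, and summing over $i=1,\dots,k$ produces the claimed formula (the upper limit $k+1$ in the statement is evidently a typo for $k$).

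The only genuinely delicate step is the distance computation underlying the second paragraph, i.e.\ the observation that a vertex in the product is closer to one endpoint of a Cartesian-product edge precisely when its projection onto the varying coordinate is closer to the corresponding endpoint in the factor; everything else is bookkeeping. If one prefers, this can be packaged as an induction on $k$ using the binary case $Mo(\mathcal{G}\otimes\mathcal{H})=s_{\mathcal{H}}^{\,2}Mo(\mathcal{G})+s_{\mathcal{G}}^{\,2}Mo(\mathcal{H})$ plus the associativity of $\otimes$, but the direct coordinate argument above gives the formula in one pass without needing an inductive hypothesis.
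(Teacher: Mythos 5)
Your proposal is correct and rests on the same key computation as the paper's proof, namely the projection identity $\mathrm{n}_{(\ldots,u_i,\ldots)}\bigl(\mathfrak{e}\mid\mathcal{G}_{l_1}\otimes\cdots\otimes\mathcal{G}_{l_k}\bigr)=\bigl(\prod_{j\neq i}s_j\bigr)\,\mathrm{n}_{u_i}(\mathfrak{e}_i\mid\mathcal{G}_{l_i})$ for an edge varying only in coordinate $i$, followed by counting the $\prod_{j\neq i}s_j$ lifts of each factor edge. The only organizational difference is that the paper verifies this for $k=2$ and then inducts on the number of factors, whereas you run the coordinate decomposition for all $k$ in one pass; both readings confirm that the upper summation limit $k+1$ in the stated formula is a typo for $k$.
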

\begin{proof}
Observe that $\mathrm{n}_{(\mathfrak{u}_{l_1},\mathfrak{u}_{l_2})}(\mathfrak{e}=(\mathfrak{u}_{l_1},\mathfrak{u}_{l_2})
(\mathfrak{u}_{l_1},\mathfrak{u'}_{l_2})|\mathcal{G}_{l_1}\otimes \mathcal{G}_{l_2})=s_1\mathrm{n}_{\mathfrak{u}_{l_2}}(\mathfrak{e}=\mathfrak{u}_{l_2}\mathfrak{u'}_{l_2}
|\mathcal{G}_{l_2})$ and $\mathrm{n}_{(\mathfrak{u}_{l_1},\mathfrak{u'}_{l_2})}(\mathfrak{e}=(\mathfrak{u}_{l_1},\mathfrak{u}_{l_2})
(\mathfrak{u}_{l_1},\mathfrak{u'}_{l_2})|\mathcal{G}_{l_1}\otimes \mathcal{G}_{l_2})= s_1\mathrm{n}_{\mathfrak{u'}_{l_2}}(\mathfrak{e}=\mathfrak{u}_{l_2}\mathfrak{u'}_{l_2}|\mathcal{G}_{l_2})$.
Analogous $\mathrm{n}_{(\mathfrak{u}_{l_1},\mathfrak{u}_{l_2})}(\mathfrak{e}=(\mathfrak{u}_{l_1},\mathfrak{u}_{l_2})
(\mathfrak{u'}_{l_1},\mathfrak{u}_{l_2})|\mathcal{G}_{l_1}\otimes \mathcal{G}_{l_2})=s_2\mathrm{n}_{\mathfrak{u}_{l_1}}(\mathfrak{e}=\mathfrak{u}_{l_1}\mathfrak{u'}_{l_1}|
\mathcal{G}_{l_1})$ and $\mathrm{n}_{(\mathfrak{u'}_{l_1},\mathfrak{u}_{l_2})}(\mathfrak{e}=(\mathfrak{u}_{l_1},\mathfrak{u}_{l_2})
(\mathfrak{u'}_{l_1},\mathfrak{u}_{l_2})|\mathcal{G}_{l_1}\otimes \mathcal{G}_{l_2})= s_2\mathrm{n}_{\mathfrak{u'}_{l_1}}(\mathfrak{e}=\mathfrak{u}_{l_1}\mathfrak{u'}_{l_1}|\mathcal{G}_{l_1})$.
Thus
\begin{eqnarray}\label{car}
\begin{split}
Mo(\mathcal{G}_{l_1}\otimes \mathcal{G}_{l_2})&=\sum\limits_{\mathfrak{u}_{l_1}\in \mathsf{V}(\mathcal{G}_{l_1})}\sum\limits_{\mathfrak{u}_{l_2}\mathfrak{u'}_{l_2}\in \mathsf{E}(\mathcal{G}_{l_2})}|\mathrm{n}_{(\mathfrak{u}_{l_1},\mathfrak{u}_{l_2})}
(\mathfrak{e}=(\mathfrak{u}_{l_1},\mathfrak{u}_{l_2})(\mathfrak{u}_{l_1},\mathfrak{u'}_{l_2})|\mathcal{G}_{l_1}
\otimes \mathcal{G}_{l_2})
\\
&-\mathrm{n}_{(\mathfrak{u}_{l_1},\mathfrak{u'}_{l_2})}(\mathfrak{e}=(\mathfrak{u}_{l_1},
\mathfrak{u}_{l_2})(\mathfrak{u}_{l_1},\mathfrak{u'}_{l_2})|\mathcal{G}_{l_1}\otimes \mathcal{G}_{l_2})|
\\
&+\sum\limits_{\mathfrak{u}_{l_2}\in \mathsf{V}(\mathcal{G}_{l_2})}\sum\limits_{\mathfrak{u}_{l_1}\mathfrak{u'}_{l_1}\in \mathsf{E}(\mathcal{G}_{l_1})}|\mathrm{n}_{(\mathfrak{u}_{l_1},\mathfrak{u}_{l_2})}(\mathfrak{e}=
(\mathfrak{u}_{l_1},\mathfrak{u}_{l_2})(\mathfrak{u'}_{l_1},\mathfrak{u}_{l_2})|\mathcal{G}_{l_1}\otimes \mathcal{G}_{l_2})
\\
&-\mathrm{n}_{(\mathfrak{u'}_{l_1},\mathfrak{u}_{l_2})}(\mathfrak{e}=(\mathfrak{u}_{l_1},\mathfrak{u}_{l_2})
(\mathfrak{u'}_{l_1},\mathfrak{u}_{l_2})|\mathcal{G}_{l_1}\otimes \mathcal{G}_{l_2})|
\\
&=\sum\limits_{\mathfrak{u}_{l_1}\in \mathsf{V}(\mathcal{G}_{l_1})}\sum\limits_{\mathfrak{u}_{l_2}\mathfrak{u'}_{l_2}\in \mathsf{E}(\mathcal{G}_{l_2})}|s_1\mathrm{n}_{\mathfrak{u}_{l_2}}(\mathfrak{e}=\mathfrak{u}_{l_2}
\mathfrak{u'}_{l_2}|\mathcal{G}_{l_2})-s_1\mathrm{n}_{\mathfrak{u'}_{l_2}}(\mathfrak{e}=\mathfrak{u}_{l_2}
\mathfrak{u'}_{l_2}|\mathcal{G}_{l_2})|
\\
&+\sum\limits_{\mathfrak{u}_{l_2}\in \mathsf{V}(\mathcal{G}_{l_2})}\sum\limits_{\mathfrak{u}_{l_1}\mathfrak{u'}_{l_1}\in \mathsf{E}(\mathcal{G}_{l_1})}\left|s_2\mathrm{n}_{\mathfrak{u}_{l_1}}(\mathfrak{e}=\mathfrak{u}_{l_1}
\mathfrak{u'}_{l_1}|\mathcal{G}_{l_1})-s_2\mathrm{n}_{\mathfrak{u'}_{l_1}}(\mathfrak{e}=\mathfrak{u}_{l_1}
\mathfrak{u'}_{l_1}|\mathcal{G}_{l_1})\right|
\\
&=s_1^2Mo(\mathcal{G}_{l_2})+s_2^2Mo(\mathcal{G}_{l_1})
\end{split}
\end{eqnarray}
Use induction on $k$.
By above \eqref{car}, the result is valid for $k=2$.
Let $k\geq3$ and assume the theorem holds for $k$.
Set $\mathbb{G}=\mathcal{G}_{l_1}\otimes \mathcal{G}_{l_2}\dots\otimes \mathcal{G}_{l_k}$.
Then we have
\begin{eqnarray*}
\begin{split}
Mo(\mathcal{G}_{l_1}\otimes \mathcal{G}_{l_2}\dots\otimes \mathcal{G}_{l_k})&=Mo(\mathbb{G}\otimes \mathcal{G}_{k+1})
\\
&=Mo(\mathbb{G})s^2_{k+1}+Mo(\mathcal{G}_{k+1})|\mathsf{V}(\mathcal{G})|^2
\\
&=s^2_{k+1}\sum\limits_{i=1}^k Mo(\mathcal{G}_i)\prod\limits_{j=1,i\neq j}^k s^2_j+Mo(\mathcal{G}_{k+1})\prod\limits_{i=1}^k s^2_i
\\
&=\sum\limits_{i=1}^k Mo(\mathcal{G}_i)\prod\limits_{j=1,i\neq j}^{k+1}s^2_j
+Mo(\mathcal{G}_{k+1})\prod\limits_{i=1}^k s^2_i
\\
&=\sum\limits_{i=1}^{k+1} Mo(\mathcal{G}_i)\prod\limits_{j=1,i\neq j}^{k+1}s^2_j
\end{split}
\end{eqnarray*}
This finishes the proof.
\end{proof}

By the use of Theorem \ref{theoremcar}, we can find the Mostar index of $k$-th Cartesian power of a graph $\mathbb{G}$.
\begin{cor}
For the positive integer $k$, $\mathbb{G}$ is a graph of order $s$, then
\begin{equation*}
Mo(\mathbb{G}^k)=ks^{2(k-1)}Mo(\mathbb{G}).
\end{equation*}
\end{cor}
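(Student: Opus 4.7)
The plan is to apply Theorem \ref{theoremcar} directly with all factor graphs equal to $\mathbb{G}$. Setting $\mathcal{G}_{l_1} = \mathcal{G}_{l_2} = \cdots = \mathcal{G}_{l_k} = \mathbb{G}$, each $s_m$ equals $s$, so the product $\prod_{j=1, j\neq i}^{k} s_j^2$ reduces to $s^{2(k-1)}$, which is independent of the index $i$. Consequently the sum in Theorem \ref{theoremcar} collapses into $k$ identical summands, each equal to $Mo(\mathbb{G})\cdot s^{2(k-1)}$, yielding the stated formula.

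The only thing I would be careful about is confirming that this substitution is legitimate: Theorem \ref{theoremcar} is proved by induction and does not require the factor graphs to be distinct (the recursive step uses only cardinalities of vertex sets and Mostar indices of the previous product and the new factor), so the case of equal factors is covered. Thus the corollary follows immediately from Theorem \ref{theoremcar} without any further calculation. There is no genuine obstacle; the argument is a one-line specialization and can be written as
\begin{equation*}
Mo(\mathbb{G}^k) = \sum_{i=1}^{k} Mo(\mathbb{G}) \prod_{j=1, j\neq i}^{k} s^2 = k\, s^{2(k-1)} Mo(\mathbb{G}).
\end{equation*}
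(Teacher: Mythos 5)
Your proposal is correct and is exactly the argument the paper intends: the corollary is stated as an immediate specialization of Theorem \ref{theoremcar} with all $k$ factors equal to $\mathbb{G}$ and all $s_m=s$, so each of the $k$ summands equals $s^{2(k-1)}Mo(\mathbb{G})$. Your added remark that the induction in the theorem never requires distinct factors is a sensible sanity check, but there is no substantive difference from the paper's (unstated) one-line derivation.
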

\begin{example}
Let $\mathbb{S}=\mathcal{C}_a\otimes \mathcal{C}_b$ and $\mathbb{R}=\mathcal{P}_a\otimes \mathcal{C}_b$, for some integers $a,b\geq 3$, denote a $\mathcal{C}_4$-nanotorus and $\mathcal{C}_4$-nanotube, respectively. Then by using Theorem \ref{theoremcar} and Proposition \ref{pro}, we obtain
\begin{enumerate}
  \item $Mo(\mathbb{S})=Mo(\mathcal{C}_a\otimes \mathcal{C}_b)=0$.
  \item $Mo(\mathbb{R})=Mo(\mathcal{P}_a\otimes \mathcal{C}_b)=b^2\left\lfloor\dfrac{(a-1)^2}{2}\right\rfloor.$
\end{enumerate}
\end{example}

\begin{example}
Consider the rectangular grid $\mathbb{G}=\mathcal{P}_a\otimes \mathcal{P}_b$, shown in Figure \ref{grid}.
By using Theorem \ref{theoremcar} and Proposition \ref{pro}, we obtain
\begin{equation*}
Mo(\mathbb{G})=Mo(\mathcal{P}_a\otimes \mathcal{P}_b)=a^2\left\lfloor\dfrac{(b-1)^2}{2}\right\rfloor+b^2\left\lfloor\dfrac{(a-1)^2}{2}\right\rfloor.
\end{equation*}
\end{example}
The graph $\mathcal{P}_2\otimes \mathcal{P}_{a+1}$ constructed by $a$ squares is said to be the ladder graph with $2a+2$ vertices and represented by $\mathbb{L}_a$ .
This graph is also the molecular graph, which can be related to the polyomino structures and known as the linear polyomino chain.
\begin{example}
Consider the ladder graph $\mathbb{L}_a=\mathcal{P}_2\otimes \mathcal{P}_{a+1}$.
By using Theorem \ref{theoremcar} and Proposition \ref{pro}, we obtain
\begin{equation*}
Mo(\mathbb{G})=Mo(\mathcal{P}_2\otimes \mathcal{P}_{a+1})=4\left\lfloor\dfrac{a^2}{2}\right\rfloor.
\end{equation*}
\end{example}

\begin{example}
The Hamming graph is a connected graph with $k$-tuples vertices $h_1h_2\dots h_k$ where $h_i\in \{0,1,\dots,s_{i-1}\}$, $s_i\geq2$, let two vertices be adjacent if the corresponding tuples differ in precisely one place. It is usually denoted as $\mathbb{H}_{s_1,s_2,\dots,s_k}=\bigotimes\limits_{l=1}^{k}\mathcal{K}_{s_l}$.
By using Theorem \ref{theoremcar} and then Proposition \ref{pro}, we get $Mo(\mathbb{H}_{s_1,s_2,\dots,s_k})=0$ such that $Mo(\mathcal{K}_{s_i})=0$.

If $h_1=h_2=\dots=h_k=2$, then the Hamming graph will be a $k$-dimensional hypercube graph, and it is denoted by $\mathcal{Q}_k$.
Then $Mo(\mathcal{Q}_k)=0$.
\end{example}


\subsection{Join of graphs}
The join of $\mathcal{G}_l$ and $\mathcal{G}_m$ graphs is denoted as $\mathcal{G}_l+\mathcal{G}_m$, which consists of edge sets $\mathsf{E}(\mathcal{G}_l)$ and $\mathsf{E}(\mathcal{G}_m)$, and disjoint vertex sets $\mathsf{V}(\mathcal{G}_l)$ and $\mathsf{V}(\mathcal{G}_m)$.
It is graph union $\mathcal{G}_l\cup \mathcal{G}_m$ including all the links joining the elements of $\mathsf{V}(\mathcal{G}_l)$ and $\mathsf{V}(\mathcal{G}_m)$.
For $\mathcal{G}_m =\underbrace{\mathcal{G}_l+\mathcal{G}_l+\dots+\mathcal{G}_l}_{k\ times}$, we represent $\mathcal{G}_m$ by $k\mathcal{G}_l$.

Next, we calculate the Mostar index of $\mathcal{G}_l+\mathcal{G}_m$.
\begin{theorem}\label{theoremjoin}
Let $\mathcal{G}_l$ and $\mathcal{G}_m$ be the two graphs. Then
\begin{eqnarray*}
\begin{split}
Mo(\mathcal{G}_l+\mathcal{G}_m)&\leq irr(\mathcal{G}_l)+irr(\mathcal{G}_m)+s_1s_2\left|s_2-s_1\right|+2(s_2t_1+s_1t_2).
\end{split}
\end{eqnarray*}
\end{theorem}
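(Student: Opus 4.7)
The plan is to partition $\mathsf{E}(\mathcal{G}_l+\mathcal{G}_m)$ into three natural classes and bound the contribution of each class to the sum in \eqref{mostar} separately: the edges coming from the original copy of $\mathcal{G}_l$, the edges coming from the original copy of $\mathcal{G}_m$, and the $s_1s_2$ ``cross'' edges joining a vertex of $\mathcal{G}_l$ with a vertex of $\mathcal{G}_m$. The driving structural fact is that in the join every vertex of $\mathcal{G}_l$ is adjacent to every vertex of $\mathcal{G}_m$, so $\mathcal{G}_l+\mathcal{G}_m$ has diameter at most $2$ and every distance question reduces to a simple neighbor count, exactly as in the corona proof.

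First I would fix $\mathfrak{e}=\mathfrak{u}_l\mathfrak{u'}_l\in\mathsf{E}(\mathcal{G}_l)$. Every $\mathfrak{v}\in\mathsf{V}(\mathcal{G}_m)$ satisfies $d_{\mathcal{G}_l+\mathcal{G}_m}(\mathfrak{v},\mathfrak{u}_l)=d_{\mathcal{G}_l+\mathcal{G}_m}(\mathfrak{v},\mathfrak{u'}_l)=1$, so such vertices contribute equally to both sides and can be discarded. For $\mathfrak{w}\in\mathsf{V}(\mathcal{G}_l)\setminus\{\mathfrak{u}_l,\mathfrak{u'}_l\}$, $\mathfrak{w}$ is strictly closer to $\mathfrak{u}_l$ iff $\mathfrak{u}_l\mathfrak{w}\in\mathsf{E}(\mathcal{G}_l)$ and $\mathfrak{u'}_l\mathfrak{w}\notin\mathsf{E}(\mathcal{G}_l)$. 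Counting such neighbors gives $\mathrm{n}_{\mathfrak{u}_l}(\mathfrak{e}|\mathcal{G}_l+\mathcal{G}_m)=\deg_{\mathcal{G}_l}(\mathfrak{u}_l)-t_{\mathfrak{u}_l\mathfrak{u'}_l}$ and symmetrically for $\mathfrak{u'}_l$. The absolute difference collapses to $|\deg_{\mathcal{G}_l}(\mathfrak{u}_l)-\deg_{\mathcal{G}_l}(\mathfrak{u'}_l)|$, so summing over $\mathsf{E}(\mathcal{G}_l)$ yields exactly $irr(\mathcal{G}_l)$; the mirror computation on $\mathsf{E}(\mathcal{G}_m)$ yields $irr(\mathcal{G}_m)$.

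Next I would handle a cross edge $\mathfrak{e}=\mathfrak{u}_l\mathfrak{u}_m$. Any $\mathfrak{w}\in\mathsf{V}(\mathcal{G}_l)\setminus\{\mathfrak{u}_l\}$ has $d(\mathfrak{w},\mathfrak{u}_m)=1\le d(\mathfrak{w},\mathfrak{u}_l)$, so on the $\mathcal{G}_l$-side only $\mathfrak{u}_l$ itself is strictly closer to $\mathfrak{u}_l$; on the $\mathcal{G}_m$-side, $\mathfrak{w}\in\mathsf{V}(\mathcal{G}_m)\setminus\{\mathfrak{u}_m\}$ is strictly closer to $\mathfrak{u}_l$ iff $\mathfrak{w}$ is a non-neighbor of $\mathfrak{u}_m$ in $\mathcal{G}_m$. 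This gives $\mathrm{n}_{\mathfrak{u}_l}(\mathfrak{e}|\mathcal{G}_l+\mathcal{G}_m)=s_2-\deg_{\mathcal{G}_m}(\mathfrak{u}_m)$ and, by the symmetric argument, $\mathrm{n}_{\mathfrak{u}_m}(\mathfrak{e}|\mathcal{G}_l+\mathcal{G}_m)=s_1-\deg_{\mathcal{G}_l}(\mathfrak{u}_l)$. Writing the difference as $(s_2-s_1)+(\deg_{\mathcal{G}_l}(\mathfrak{u}_l)-\deg_{\mathcal{G}_m}(\mathfrak{u}_m))$ and applying the triangle inequality together with non-negativity of degrees bounds the per-edge contribution by $|s_2-s_1|+\deg_{\mathcal{G}_l}(\mathfrak{u}_l)+\deg_{\mathcal{G}_m}(\mathfrak{u}_m)$.

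Finally, summing over the $s_1s_2$ cross edges, the constant term contributes $s_1s_2|s_2-s_1|$ and the handshaking lemma converts the two degree sums into $2s_2t_1$ and $2s_1t_2$, respectively. Combining the three pieces gives the asserted upper bound. The only place an inequality enters (rather than an equality) is the triangle inequality on cross edges, which both explains why the conclusion is stated as an inequality and is the only nontrivial step; the remaining work is the careful bookkeeping enabled by the diameter-$2$ property of the join.
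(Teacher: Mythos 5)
Your proposal is correct and follows essentially the same route as the paper: the same three-way edge decomposition, the same exact formulas $\mathrm{n}_{\mathfrak{u}_l}=\deg_{\mathcal{G}_l}(\mathfrak{u}_l)-|N_{\mathcal{G}_l}(\mathfrak{u}_l\mathfrak{u'}_l)|$ on internal edges (yielding the irregularity terms as equalities) and $\mathrm{n}_{\mathfrak{u}_l}=s_2-\deg_{\mathcal{G}_m}(\mathfrak{u}_m)$, $\mathrm{n}_{\mathfrak{u}_m}=s_1-\deg_{\mathcal{G}_l}(\mathfrak{u}_l)$ on cross edges, followed by the same triangle-inequality and handshaking step. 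The only cosmetic difference is that you denote the common-neighbor count by the triangle count $t_{\mathfrak{u}_l\mathfrak{u'}_l}$ where the paper writes $|N_{\mathcal{G}_l}(\mathfrak{u}_l\mathfrak{u'}_l)|$.
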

\begin{proof}
For an edge $\mathfrak{u}_l\mathfrak{u'}_l$ of a graph $\mathcal{G}_l$, let $N_{\mathcal{G}_l}(\mathfrak{u}_l\mathfrak{u'}_l)$ be the set of common neighbors of $\mathfrak{u}_l$ and $\mathfrak{u'}_l$.
\begin{eqnarray*}
\begin{split}
Mo(\mathcal{G}_l+\mathcal{G}_m)&=\sum\limits_{\mathfrak{u}_l\mathfrak{u'}_l\in \mathsf{E}(\mathcal{G}_l)}\left|\mathrm{n}_{\mathfrak{u}_l}(\mathfrak{u}_l\mathfrak{u'}_l|\mathcal{G}_l
+\mathcal{G}_m)-\mathrm{n}_{\mathfrak{u'}_l}(\mathfrak{u}_l\mathfrak{u'}_l|\mathcal{G}_l+\mathcal{G}_m)\right|
\\
&+\sum\limits_{\mathfrak{u}_m\mathfrak{u'}_m\in \mathsf{E}(\mathcal{G}_m)}\left|\mathrm{n}_{\mathfrak{u}_m}(\mathfrak{u}_m\mathfrak{u'}_m|
\mathcal{G}_l+\mathcal{G}_m)-\mathrm{n}_{\mathfrak{u'}_m}(\mathfrak{u}_m\mathfrak{u'}_m|
\mathcal{G}_l+\mathcal{G}_m)\right|
\\
&+\sum\limits_{\mathfrak{u}_l\in \mathsf{V}(\mathcal{G}_l)}\sum\limits_{\mathfrak{u}_m\in \mathsf{V}(\mathcal{G}_m)}\left|\mathrm{n}_{\mathfrak{u}_l}(\mathfrak{u}_l\mathfrak{u}_m|
\mathcal{G}_l+\mathcal{G}_m)-\mathrm{n}_{\mathfrak{u}_m}(\mathfrak{u}_l\mathfrak{u}_m|
\mathcal{G}_l+\mathcal{G}_m)\right|.
\end{split}
\end{eqnarray*}
Since the join of two graphs has diameter at most two.
Observe that if $\mathfrak{u}_l\mathfrak{u'}_l\in \mathsf{E}(\mathcal{G}_l)$ then we have $\mathrm{n}_{\mathfrak{u}_l}(\mathfrak{e}=\mathfrak{u}_l\mathfrak{u'}_l|\mathcal{G}_l+\mathcal{G}_m)
=\deg_{\mathcal{G}_l}(\mathfrak{u}_l)-|N_{\mathcal{G}_l}(\mathfrak{u}_l\mathfrak{u'}_l)|$ and $\mathrm{n}_{\mathfrak{u'}_l}(\mathfrak{e}=\mathfrak{u}_l\mathfrak{u'}_l|\mathcal{G}_l+\mathcal{G}_m)
=\deg_{\mathcal{G}_l}(\mathfrak{u'}_l)-|N_{\mathcal{G}_l}(\mathfrak{u}_l\mathfrak{u'}_l)|$.
Similarly if $\mathfrak{u}_m\mathfrak{u'}_m\in \mathsf{E}(\mathcal{G}_m)$ then we have $\mathrm{n}_{\mathfrak{u}_m}(\mathfrak{e}=\mathfrak{u}_m\mathfrak{u'}_m|\mathcal{G}_l+\mathcal{G}_m)
=\deg_{\mathcal{G}_m}(\mathfrak{u}_m)-|N_{\mathcal{G}_m}(\mathfrak{u}_m\mathfrak{u'}_m)|$ and $\mathrm{n}_{\mathfrak{u'}_m}(\mathfrak{e}=\mathfrak{u}_m\mathfrak{u'}_m|\mathcal{G}_l+\mathcal{G}_m)
=\deg_{\mathcal{G}_m}(\mathfrak{u'}_m)-|N_{\mathcal{G}_m}(\mathfrak{u}_m\mathfrak{u'}_m)|$.
Analogous if $\mathfrak{u}_l\in \mathsf{V}(\mathcal{G}_l)$ and $\mathfrak{u}_m\in \mathsf{V}(\mathcal{G}_m)$ then we have $\mathrm{n}_{\mathfrak{u}_l}(\mathfrak{e}=\mathfrak{u}_l\mathfrak{u}_m|\mathcal{G}_l+\mathcal{G}_m)
=s_2-\deg_{\mathcal{G}_m}(\mathfrak{u}_m)$ and $\mathrm{n}_{\mathfrak{u}_m}(\mathrm{e}=\mathfrak{u}_l\mathfrak{u}_m|\mathcal{G}_l+\mathcal{G}_m)
=s_1-\deg_{\mathcal{G}_l}(\mathfrak{u}_l)$.
Therefore
\begin{eqnarray*}
\begin{split}
Mo(\mathcal{G}_l+\mathcal{G}_m)&=\sum\limits_{\mathfrak{u}_l\mathfrak{u'}_l\in \mathsf{E}(\mathcal{G}_l)}\left|\deg_{\mathcal{G}_l}(\mathfrak{u}_l)-|N_{\mathcal{G}_l}(\mathfrak{u}_l
\mathfrak{u'}_l)|-\deg_{\mathcal{G}_l}(\mathfrak{u'}_l)+|N_{\mathcal{G}_l}(\mathfrak{u}_l\mathfrak{u'}_l)|\right|
\\
&+\sum\limits_{\mathfrak{u}_m\mathfrak{u'}_m\in \mathsf{E}(\mathcal{G}_m)}\left|\deg_{\mathcal{G}_m}(\mathfrak{u}_m)-|N_{\mathcal{G}_m}(\mathfrak{u}_m
\mathfrak{u'}_m)|-\deg_{\mathcal{G}_m}(\mathfrak{u'}_m)+|N_{\mathcal{G}_m}(\mathfrak{u}_m\mathfrak{u'}_m)|\right|
\\
&+\sum\limits_{\mathfrak{u}_l\in \mathsf{V}(\mathcal{G}_l)}\sum\limits_{\mathfrak{u}_m\in \mathsf{V}(\mathcal{G}_m)}\left|s_2-\deg_{\mathcal{G}_m}(\mathfrak{u}_m)-s_1+\deg_{\mathcal{G}_l}
(\mathfrak{u}_l)\right|
\\
&\leq\sum\limits_{\mathfrak{u}_l\mathfrak{u'}_l\in \mathsf{E}(\mathcal{G}_l)}\left|\deg_{\mathcal{G}_l}(\mathfrak{u}_l)-\deg_{\mathcal{G}_l}(\mathfrak{u'}_l)\right|
+\sum\limits_{\mathfrak{u}_m\mathfrak{u'}_m\in \mathsf{E}(\mathcal{G}_m)}\left|\deg_{\mathcal{G}_m}(\mathfrak{u}_m)-\deg_{\mathcal{G}_m}(\mathfrak{u'}_m)\right|
\\
&+\sum\limits_{\mathfrak{u}_l\in \mathsf{V}(\mathcal{G}_l)}\sum\limits_{\mathfrak{u}_m\in \mathsf{V}(\mathcal{G}_m)}\left|s_2-s_1\right|
+\sum\limits_{\mathfrak{u}_l\in \mathsf{V}(\mathcal{G}_l)}\sum\limits_{\mathfrak{u}_m\in \mathsf{V}(\mathcal{G}_m)}\deg_{\mathcal{G}_m}(\mathfrak{u}_m)+\sum\limits_{\mathfrak{u}_l\in \mathsf{V}(\mathcal{G}_l)}\sum\limits_{\mathfrak{u}_m\in \mathsf{V}(\mathcal{G}_m)}\deg_{\mathcal{G}_l}(\mathfrak{u}_l)
\\
&=irr(\mathcal{G}_l)+irr(\mathcal{G}_m)+s_1s_2\left|s_2-s_1\right|+2(s_2t_1+s_1t_2).
\end{split}
\end{eqnarray*}
This completes the proof.
\end{proof}
\begin{cor}\label{corjoin}
Let $\mathcal{G}_l$ and $\mathcal{G}_m$ be the $\mathfrak{r}_1$ and $\mathfrak{r}_2$ regular graphs, respectively.
Then
\begin{equation*}
Mo(\mathcal{G}_l+\mathcal{G}_m)=s_1s_2\left|s_2-s_1+\mathfrak{r}_1-\mathfrak{r}_2\right|.
\end{equation*}
\end{cor}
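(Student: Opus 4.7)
The plan is to specialize the derivation used in the proof of Theorem~\ref{theoremjoin}, working from the exact identity that appears just before the triangle inequality is invoked; under regularity that inequality becomes unnecessary and we recover an equality.

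First, I would observe that since $\mathcal{G}_l$ is $\mathfrak{r}_1$-regular and $\mathcal{G}_m$ is $\mathfrak{r}_2$-regular, we have $\deg_{\mathcal{G}_l}(\mathfrak{u}_l) = \mathfrak{r}_1$ for every $\mathfrak{u}_l \in \mathsf{V}(\mathcal{G}_l)$, and $\deg_{\mathcal{G}_m}(\mathfrak{u}_m) = \mathfrak{r}_2$ for every $\mathfrak{u}_m \in \mathsf{V}(\mathcal{G}_m)$. Consequently $irr(\mathcal{G}_l) = irr(\mathcal{G}_m) = 0$, so the two edge-sums over $\mathsf{E}(\mathcal{G}_l)$ and $\mathsf{E}(\mathcal{G}_m)$ appearing in the proof of Theorem~\ref{theoremjoin} vanish term by term (each absolute value $|\deg(\mathfrak{u}) - \deg(\mathfrak{u'})|$ is zero).

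Second, for the third sum, which runs over the cross-edges $\mathfrak{e} = \mathfrak{u}_l\mathfrak{u}_m$ with $\mathfrak{u}_l \in \mathsf{V}(\mathcal{G}_l)$ and $\mathfrak{u}_m \in \mathsf{V}(\mathcal{G}_m)$, the proof of Theorem~\ref{theoremjoin} already established the exact identities $\mathrm{n}_{\mathfrak{u}_l}(\mathfrak{e}|\mathcal{G}_l+\mathcal{G}_m) = s_2 - \deg_{\mathcal{G}_m}(\mathfrak{u}_m)$ and $\mathrm{n}_{\mathfrak{u}_m}(\mathfrak{e}|\mathcal{G}_l+\mathcal{G}_m) = s_1 - \deg_{\mathcal{G}_l}(\mathfrak{u}_l)$. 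Substituting the regular degrees these become $s_2 - \mathfrak{r}_2$ and $s_1 - \mathfrak{r}_1$, both independent of the endpoints chosen. Hence each of the $s_1 s_2$ cross-edges contributes exactly $|(s_2 - \mathfrak{r}_2) - (s_1 - \mathfrak{r}_1)| = |s_2 - s_1 + \mathfrak{r}_1 - \mathfrak{r}_2|$, and summing yields the claimed formula $Mo(\mathcal{G}_l + \mathcal{G}_m) = s_1 s_2 |s_2 - s_1 + \mathfrak{r}_1 - \mathfrak{r}_2|$.

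There is no serious obstacle here; the only point to be careful about is not to quote the upper bound of Theorem~\ref{theoremjoin} directly, since the triangle inequality used there would give the weaker estimate $s_1 s_2 |s_2 - s_1| + 2 s_1 s_2(\mathfrak{r}_1 + \mathfrak{r}_2)/\,\text{(appropriate factor)}$. Instead, I would start from the intermediate exact identity in that proof and substitute the constant degrees before taking absolute values, so that the equality of the corollary emerges cleanly.
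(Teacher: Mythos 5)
Your proposal is correct and is exactly the intended derivation: the paper states the corollary without a separate proof, and it follows from the exact identities established in the proof of Theorem~\ref{theoremjoin} (namely $\mathrm{n}_{\mathfrak{u}_l}(\mathfrak{e}|\mathcal{G}_l+\mathcal{G}_m)=s_2-\deg_{\mathcal{G}_m}(\mathfrak{u}_m)$ and $\mathrm{n}_{\mathfrak{u}_m}(\mathfrak{e}|\mathcal{G}_l+\mathcal{G}_m)=s_1-\deg_{\mathcal{G}_l}(\mathfrak{u}_l)$) once regularity makes the two internal edge-sums vanish and the cross-edge contribution constant. Your caution about working from the intermediate equality rather than the theorem's final upper bound is precisely the right point, since the theorem as stated is only an inequality.
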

\begin{example}
The cone graph $\mathcal{C}_{f,g}$ can be expressed as $\mathcal{C}_f +\overline{\mathcal{K}}_g$ and its Mostar index is $Mo(\mathcal{C}_{f,g})=fg|f-g+2|$.
\end{example}

For a given graph $\mathcal{G}_m$, the suspension of $\mathcal{G}_m$ is described as $\mathcal{K}_1+\mathcal{G}_m$.
The next result can be deduced from the Corollary \ref{corjoin}.
\begin{example}
For a graph $\mathcal{G}_m$ with $|\mathsf{V}(\mathcal{G}_m)|=s$, we have
\begin{equation}\label{q1}
Mo(\mathcal{K}_1+\mathcal{G}_m)\leq irr(\mathcal{G}_m)+s(s-1)+2s.
\end{equation}
If $\mathcal{G}_m$ is a $\mathfrak{r}$-regular graph then $Mo(\mathcal{K}_1+\mathcal{G}_m)=s|s-1-\mathfrak{r}|$.
Star graph $S_{s+1}$, fan graph $F_{s+1}$ and wheel graph $W_{s+1}$ on $s+1$ vertices are the suspensions of the empty graph, $\overline{\mathcal{K}_s}$, $\mathcal{P}_s$ and $\mathcal{C}_s$, respectively.
Then, by \eqref{q1}, we have
\begin{eqnarray*}
\begin{tabular}{c c c}
$Mo(S_{s+1})=s(s-1)$,& $Mo(F_{s+1})\leq s(s+1)$, & $Mo(W_{s+1})=s|s-3|.$
\end{tabular}
\end{eqnarray*}
\end{example}
\begin{example}
The flower graph or dutch windmill graph is the suspension of $g$ copies of $\mathcal{K}_2$, denoted by $g\mathcal{K}_2$.
The Mostar index of flower graph is given by $Mo(\mathcal{K}_1+g\mathcal{K}_2)\leq4g$.
\end{example}
\subsection{Lexicographic Product}
The lexicographic product of $\mathsf{G}_l$ and $\mathsf{G}_m$ graphs is represented by $\mathsf{G}_l[\mathsf{G}_m].$
It has $\mathsf{V}(\mathsf{G}_l)\times \mathsf{V}(\mathsf{G}_m)$ vertex set and $(\mathfrak{u}_l,\mathfrak{u}_m)(\mathfrak{u'}_l,\mathfrak{u'}_m)\in \mathsf{E}(\mathsf{G}_l[\mathsf{G}_m])$ if $g_1g_2\in \mathsf{E}(\mathsf{G}_l)$ or $\mathfrak{u}_l=\mathfrak{u'}_l$ and $\mathfrak{u}_m\mathfrak{u'}_m\in \mathsf{E}(\mathsf{G}_m)$.

Now, we give the expression for Mostar index of lexicographic product of $\mathsf{G}_l$ and $\mathsf{G}_m$.
\begin{theorem}\label{theoremcom}
Let $\mathsf{G}_l$ and $\mathsf{G}_m$ be the two graphs. Then
\begin{eqnarray*}
\begin{split}
Mo(\mathsf{G}_l[\mathsf{G}_m])&\leq \left\{
\begin{array}{ll}
s_2^3Mo(\mathsf{G}_l)+s_1irr(\mathsf{G}_m)+\dfrac{t_1}{6}\left(2s_2^3-3s_2^2-2s_2+3\right)        & \mbox{if $s_2$ is odd,} \\\\
s_2^3Mo(\mathsf{G}_l)+s_1irr(\mathsf{G}_m)+\dfrac{t_1}{6}\left(2s_2^3-3s_2^2-2s_2\right)          & \mbox{if $s_2$ is even.}
\end{array}
\right.
\end{split}
\end{eqnarray*}
\end{theorem}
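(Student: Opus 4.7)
The plan is to split $\mathsf{E}(\mathsf{G}_l[\mathsf{G}_m])$ into two natural classes and handle each separately. The \emph{fiber edges} are those of the form $(\mathfrak{u}_l,\mathfrak{u}_m)(\mathfrak{u}_l,\mathfrak{u'}_m)$ with $\mathfrak{u}_m\mathfrak{u'}_m\in\mathsf{E}(\mathsf{G}_m)$, and the \emph{cross edges} are those of the form $(\mathfrak{u}_l,\mathfrak{u}_m)(\mathfrak{u'}_l,\mathfrak{u'}_m)$ with $\mathfrak{u}_l\mathfrak{u'}_l\in\mathsf{E}(\mathsf{G}_l)$ and arbitrary $\mathfrak{u}_m,\mathfrak{u'}_m$. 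The underlying structural fact I will rely on is that, in $\mathsf{G}_l[\mathsf{G}_m]$, the distance between vertices from distinct fibers $\{\mathfrak{u}_l\}\times\mathsf{V}(\mathsf{G}_m)$ and $\{\mathfrak{u'}_l\}\times\mathsf{V}(\mathsf{G}_m)$ equals $d_{\mathsf{G}_l}(\mathfrak{u}_l,\mathfrak{u'}_l)$ independently of the second coordinates, while within a single fiber any two vertices are at distance at most $2$ whenever the fiber index has a neighbor in $\mathsf{G}_l$.

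For a fiber edge $\mathfrak{e}=(\mathfrak{u}_l,\mathfrak{u}_m)(\mathfrak{u}_l,\mathfrak{u'}_m)$, every vertex outside $\{\mathfrak{u}_l\}\times\mathsf{V}(\mathsf{G}_m)$ is equidistant from the two endpoints, so only common and exclusive neighbors of $\mathfrak{u}_m,\mathfrak{u'}_m$ inside $\mathsf{G}_m$ matter. A direct count gives $\mathrm{n}_{(\mathfrak{u}_l,\mathfrak{u}_m)}(\mathfrak{e})=\deg_{\mathsf{G}_m}(\mathfrak{u}_m)-|N(\mathfrak{u}_m)\cap N(\mathfrak{u'}_m)|$, and the common-neighbor term cancels in the absolute difference, leaving $|\deg_{\mathsf{G}_m}(\mathfrak{u}_m)-\deg_{\mathsf{G}_m}(\mathfrak{u'}_m)|$. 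Summing over the $s_1$ copies of $\mathsf{G}_m$ yields precisely $s_1\cdot irr(\mathsf{G}_m)$, accounting for the second term in the bound.

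For a cross edge $\mathfrak{e}=(\mathfrak{u}_l,\mathfrak{u}_m)(\mathfrak{u'}_l,\mathfrak{u'}_m)$, I partition the vertex set into the two incident fibers and the remaining fibers. Each remaining fiber $\{w_l\}\times\mathsf{V}(\mathsf{G}_m)$ lies entirely on one side: all $s_2$ of its vertices are closer to $(\mathfrak{u}_l,\mathfrak{u}_m)$ iff $w_l$ is closer to $\mathfrak{u}_l$ than to $\mathfrak{u'}_l$ in $\mathsf{G}_l$. Inside the two incident fibers, only the endpoints themselves are strictly closer to themselves, while the ``opposite'' incident fiber contributes those vertices not adjacent to the closer endpoint in $\mathsf{G}_m$. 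Combining these yields $\mathrm{n}_{(\mathfrak{u}_l,\mathfrak{u}_m)}(\mathfrak{e})=s_2\,\mathrm{n}_{\mathfrak{u}_l}(\mathfrak{u}_l\mathfrak{u'}_l|\mathsf{G}_l)-\deg_{\mathsf{G}_m}(\mathfrak{u'}_m)$, and the symmetric formula for the other endpoint. The triangle inequality then bounds the absolute difference by $s_2|\mathrm{n}_{\mathfrak{u}_l}-\mathrm{n}_{\mathfrak{u'}_l}|+|\deg_{\mathsf{G}_m}(\mathfrak{u}_m)-\deg_{\mathsf{G}_m}(\mathfrak{u'}_m)|$; this is the step that turns the identity into an inequality. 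Summing over all $t_1$ edges of $\mathsf{G}_l$ and all $s_2^2$ pairs $(\mathfrak{u}_m,\mathfrak{u'}_m)$ produces $s_2^3\,Mo(\mathsf{G}_l)+2t_1\cdot irr_t(\mathsf{G}_m)$, and substituting the bound of Proposition~\ref{pro1} for $irr_t(\mathsf{G}_m)$ gives the parity-dependent cubic in $s_2$ in the statement.

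The main obstacle is the careful distance-and-neighborhood bookkeeping inside the two incident fibers for cross edges: one must avoid double-counting the endpoints, correctly identify which vertices are equalized in distance by the cross edge itself rather than by a $\mathsf{G}_m$-path, and verify that the formula $\mathrm{n}_{(\mathfrak{u}_l,\mathfrak{u}_m)}(\mathfrak{e})=s_2\,\mathrm{n}_{\mathfrak{u}_l}-\deg_{\mathsf{G}_m}(\mathfrak{u'}_m)$ holds in all degenerate cases. Once that is in hand, the remaining steps are assembly: the triangle-inequality slack together with Proposition~\ref{pro1} are precisely what force the statement to be an inequality and produce the extra $+3$ in the odd case.
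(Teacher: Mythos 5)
Your proposal follows essentially the same route as the paper's proof: the same split into fiber edges (yielding $s_1\,irr(\mathsf{G}_m)$ after the common-neighbor cancellation) and cross edges (with $\mathrm{n}_{(\mathfrak{u}_l,\mathfrak{u}_m)}$ expressed via $s_2\,\mathrm{n}_{\mathfrak{u}_l}(\cdot|\mathsf{G}_l)$ minus a degree term), the same triangle-inequality step producing $s_2^3Mo(\mathsf{G}_l)+2t_1\,irr_t(\mathsf{G}_m)$, and the same final appeal to Proposition~\ref{pro1}. The additive constants in your formula for $\mathrm{n}_{(\mathfrak{u}_l,\mathfrak{u}_m)}$ differ from the paper's by $s_2$, but this cancels in the absolute difference and does not affect the argument.
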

\begin{proof}
For an edge $\mathfrak{u}_l\mathfrak{u'}_l$ of a graph $\mathsf{G}_l$, let $N_{\mathsf{G}_l}(\mathfrak{u}_l\mathfrak{u'}_l)$ be the set of common neighbors of $\mathfrak{u}_l$ and $\mathfrak{u'}_l$.
Observe that if $\mathfrak{u}_l\in \mathsf{V}(\mathsf{G}_l)$ and $\mathfrak{u}_m\mathfrak{u'_m}\in \mathsf{E}(\mathsf{G}_m)$ then we have $\mathrm{n}_{(\mathfrak{u}_l,\mathfrak{u}_m)}(\mathfrak{e}=(\mathfrak{u}_l,\mathfrak{u}_m)(\mathfrak{u}_l,
\mathfrak{u'}_m)|\mathsf{G}_l[\mathsf{G}_m])=\deg_{\mathsf{G}_m}(\mathfrak{u}_m)
-|N_{\mathsf{G}_m}(\mathfrak{u}_m\mathfrak{u'}_m)|$ and $\mathrm{n}_{(\mathfrak{u}_l,\mathfrak{u'}_m)}(\mathrm{e}=(\mathfrak{u}_l,\mathfrak{u}_m)(\mathfrak{u}_l,
\mathfrak{u'}_m)|\mathsf{G}_l[\mathsf{G}_m])=\deg_{\mathsf{G}_m}(\mathfrak{u}_m)-|N_{\mathsf{G}_m}
(\mathfrak{u}_m\mathfrak{u'}_m)|$.
Analogous if $\mathfrak{u}_m,\mathfrak{u'}_m\in \mathsf{V}(\mathsf{G}_m)$ and $\mathfrak{u}_l\mathfrak{u'}_l\in \mathsf{E}(\mathsf{G}_l)$ then we have $\mathrm{n}_{(\mathfrak{u}_l,\mathfrak{u}_m)}(\mathfrak{e}=(\mathfrak{u}_l,\mathfrak{u}_m)(\mathfrak{u'}_l,
\mathfrak{u'}_m)|\mathsf{G}_l[\mathsf{G}_m])=s_2-\deg_{\mathsf{G}_m}(\mathfrak{u'}_m)+s_2 \mathrm{n}_{\mathfrak{u}_l}(\mathfrak{u}_l\mathfrak{u'}_l|\mathsf{G}_l)$ and $\mathrm{n}_{(\mathfrak{u'}_l,\mathfrak{u}_m)}(\mathfrak{e}=(\mathfrak{u}_l,\mathfrak{u}_m)
(\mathfrak{u'}_l,\mathfrak{u}_m)|\mathsf{G}_l[\mathsf{G}_m])=s_2-\deg_{\mathsf{G}_m}(\mathfrak{u}_m)+s_2 \mathrm{n}_{\mathfrak{u'}_l}(\mathfrak{u}_l\mathfrak{u'}_l|\mathsf{G}_l)$.
Therefore
\begin{eqnarray*}
\begin{split}
Mo(\mathsf{G}_l[\mathsf{G}_m])&=\sum\limits_{\mathfrak{u}_l\in \mathsf{V}(\mathsf{G}_l)}\sum\limits_{\mathfrak{u}_m\mathfrak{u'}_m\in \mathsf{E}(\mathsf{G}_m)}|\mathrm{n}_{(\mathfrak{u}_l,\mathfrak{u}_m)}(\mathfrak{e}=(\mathfrak{u}_l,
\mathfrak{u}_m)(\mathfrak{u}_l,\mathfrak{u'}_m)|\mathsf{G}_l[\mathsf{G}_m])
\\
&-\mathrm{n}_{(\mathfrak{u}_l,
\mathfrak{u'}_m)}(\mathfrak{e}=(\mathfrak{u}_l,\mathfrak{u}_m)(\mathfrak{u}_l,\mathfrak{u'}_m)
|\mathsf{G}_l[\mathsf{G}_m]))|
\\
&+\sum\limits_{\mathfrak{u}_m\in \mathsf{V}(\mathsf{G}_m)}\sum\limits_{\mathfrak{u'}_m\in \mathsf{V}(\mathsf{G}_m)}\sum\limits_{\mathfrak{u}_l\mathfrak{u'}_l\in \mathsf{E}(\mathsf{G}_l)}|\mathrm{n}_{(\mathfrak{u}_l,\mathfrak{u}_m)}(\mathfrak{e}=(\mathfrak{u}_l,
\mathfrak{u}_m)(\mathfrak{u'}_l,\mathfrak{u'}_m)|\mathsf{G}_l[\mathsf{G}_m])
\\
&-\mathrm{n}_{(\mathfrak{u}_l,
\mathfrak{u'}_m)}(\mathfrak{e}=(\mathfrak{u}_l,\mathfrak{u}_m)(\mathfrak{u'}_l,\mathfrak{u'}_m)
|\mathsf{G}_l[\mathsf{G}_m])|
\\
&=\sum\limits_{\mathfrak{u}_l\in \mathsf{V}(\mathsf{G}_l)}\sum\limits_{\mathfrak{u}_m\mathfrak{u'}_m\in \mathsf{E}(\mathsf{G}_m)}\left|\deg_{\mathsf{G}_l}(\mathfrak{u}_m)-|N_{\mathsf{G}_m}(\mathfrak{u}_m\mathfrak{u'}_m)|
-\deg_{\mathsf{G}_m}(\mathfrak{u'}_m)+|N_{\mathsf{G}_m}(\mathfrak{u}_m\mathfrak{u'}_m)|\right|
\\
&+\sum\limits_{\mathfrak{u}_m\in \mathsf{V}(\mathsf{G}_m)}\sum\limits_{\mathfrak{u'}_m\in \mathsf{V}(\mathsf{G}_m)}\sum\limits_{\mathfrak{u}_l\mathfrak{u'}_l\in \mathsf{E}(\mathsf{G}_l)}|s_2-\deg_{\mathsf{G}_m}(\mathfrak{u'}_m)+s_2 \mathrm{n}_{\mathfrak{u}_l}(\mathfrak{u}_l\mathfrak{u'}_l|\mathsf{G}_l)-s_2
+\deg_{\mathsf{G}_m}(\mathfrak{u}_m)
\\
&-s_2\mathrm{n}_{\mathfrak{u'}_l}(\mathfrak{u}_l\mathfrak{u'}_l|\mathsf{G}_l)|
\\
&\leq s_1 irr(\mathsf{G}_m)+s_2^3 Mo(\mathsf{G}_l)+2t_2 irr_t(\mathsf{G}_m).
\end{split}
\end{eqnarray*}
By using Proposition \ref{pro1}, we obtained the desired result. This finishes the proof.
\end{proof}
\begin{example}
The fence graph and closed fence graph are the lexicographic product of $\mathcal{P}_g$ and $\mathcal{P}_2$, and, $\mathcal{C}_g$ and $\mathcal{P}_2$ respectively.
Then from Theorem \ref{theoremcom} and Proposition \ref{pro}, we have $Mo(\mathcal{C}_g[\mathcal{P}_2])=0$ and $Mo(\mathcal{P}_g[\mathcal{P}_2])\leq8\left\lfloor\dfrac{(g-1)^2}{2}\right\rfloor$.
\end{example}
\begin{example}
Let $\mathcal{P}_g$ and $\mathcal{P}_h$ be the two paths. Then
\begin{eqnarray*}
\begin{split}
Mo(\mathcal{P}_g[\mathcal{P}_h])&\leq\left\{
\begin{array}{ll}
2g+h^3\left\lfloor\dfrac{(g-1)^2}{2}\right\rfloor+\dfrac{g-1}{6}(2h^3-3h^2-2h+3), & \mbox{if $h$ is odd,}\\\\
2g+h^3\left\lfloor\dfrac{(g-1)^2}{2}\right\rfloor+\dfrac{g-1}{6}(2h^3-3h^2-2h),   & \mbox{if $h$ is even.}
\end{array}
\right.
\end{split}
\end{eqnarray*}
\end{example}
\subsection{Indu-Bala Product}
The Indu-Bala product $\mathcal{G}_l\blacktriangledown \mathcal{G}_m$ is obtained from two disjoint copies of $\mathcal{G}_l+\mathcal{G}_m$ by joining the corresponding vertices in the two copies of $\mathcal{G}_m$. For example, (see Figure \ref{fence1}).
The order and size of $\mathcal{G}_l\blacktriangledown \mathcal{G}_m$ are $2(s_{1}+s_{2})$ and $2t_1+2t_2+2s_1s_2+s_2$, respectively.

Now, we give the expression for Mostar index of Indu-Bala product of $\mathcal{G}_l$ and $\mathcal{G}_m$.
\begin{theorem}\label{theoremindu}
Let $\mathcal{G}_l$ and $\mathcal{G}_m$ be the two graphs. Then
\begin{eqnarray*}
\begin{split}
Mo(\mathcal{G}_l\blacktriangledown \mathcal{G}_m)&\leq2\left(irr(\mathcal{G}_l)+2irr(\mathcal{G}_m)
+s_1s_2\left|s_2-2s_1-1\right|+2(s_2t_1+s_1t_2)\right).
\end{split}
\end{eqnarray*}
\end{theorem}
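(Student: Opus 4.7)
The plan is to adapt the argument from Theorem \ref{theoremjoin}: decompose $E(\mathcal{G}_l\blacktriangledown \mathcal{G}_m)$ into symmetry classes and apply a block-by-block distance analysis. Writing the vertex set as $V(\mathcal{G}_l^1)\cup V(\mathcal{G}_m^1)\cup V(\mathcal{G}_m^2)\cup V(\mathcal{G}_l^2)$, the $\mathbb{Z}_2$-symmetry that exchanges the two halves groups edges into four classes: (a) the $2t_1$ edges inside the two copies of $\mathcal{G}_l$; (b) the $2t_2$ edges inside the two copies of $\mathcal{G}_m$; (c) the $2s_1s_2$ complete-bipartite join edges inside each half $\mathcal{G}_l^k+\mathcal{G}_m^k$; and (d) the $s_2$ matching edges. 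It suffices to treat one representative per class and then double. The enabling structural fact is that $\mathcal{G}_l\blacktriangledown \mathcal{G}_m$ has diameter at most $3$.

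For a class (a) edge $e=uv\subset \mathcal{G}_l^1$, both endpoints are universally adjacent to $V(\mathcal{G}_m^1)$, which forces every vertex of $V(\mathcal{G}_m^1)$, $V(\mathcal{G}_m^2)$, and $V(\mathcal{G}_l^2)$ to be equidistant from $u$ and $v$. Only $V(\mathcal{G}_l^1)$ contributes, and as in the join argument the calculation collapses to $|\deg_{\mathcal{G}_l}(u)-\deg_{\mathcal{G}_l}(v)|$, summing to $2\,irr(\mathcal{G}_l)$. For a class (b) edge $e=xy\subset \mathcal{G}_m^1$, the block $V(\mathcal{G}_l^1)$ cancels equidistantly, $V(\mathcal{G}_m^1)$ contributes $\deg_{\mathcal{G}_m}(x)-\deg_{\mathcal{G}_m}(y)$ exactly as in the join, and the matching produces a second copy of this contribution from $V(\mathcal{G}_m^2)$: a vertex $\hat w \in V(\mathcal{G}_m^2)$ sits at distance $2$ from $x$ iff $xw \in E(\mathcal{G}_m)$ and at distance $3$ otherwise. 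Summed over both copies this yields $4\,irr(\mathcal{G}_m)$. A short symmetry check shows class (d) matching edges contribute $0$, since both endpoints play symmetric roles between the two halves.

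For a class (c) edge $e=uu_m$ with $u\in V(\mathcal{G}_l^1)$ and $u_m\in V(\mathcal{G}_m^1)$, I would tabulate which vertices of each block are closer to $u$ and which to $u_m$. The new phenomena compared to the join are that the $s_1$ vertices of $V(\mathcal{G}_l^2)$ are pulled at distance $2$ toward $u_m$ (via $\hat u_m$) and at distance $3$ from $u$, and that $\hat u_m$ itself is at distance $1$ from $u_m$ and distance $2$ from $u$. Writing $n_u(e)-n_{u_m}(e)$ as a linear function of $s_1, s_2, \deg_{\mathcal{G}_l}(u), \deg_{\mathcal{G}_m}(u_m)$ with an absolute constant, applying the triangle inequality term by term, summing over all $s_1s_2$ such edges in one half and doubling, delivers the $s_1s_2\,|s_2-2s_1-1|$ and $2(s_2t_1+s_1t_2)$ pieces of the bound. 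Combining the four class-bounds and factoring out the prefactor $2$ produces the stated inequality.

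The main obstacle is the class (b) and class (c) distance analysis: the matching breaks the clean diameter-$2$ cancellation that made Theorem \ref{theoremjoin}'s proof direct, so one must case-split on adjacency in $\mathcal{G}_m$ to decide whether a vertex in the opposite $\mathcal{G}_m$-copy is at distance $2$ or $3$ from a given endpoint, and separately track the matched partner $\hat u_m$ and the $V(\mathcal{G}_l^2)$ block. Once those distances are pinned down, the degree-sum manipulations mirror those of Theorems \ref{theoremcor} and \ref{theoremjoin} and the bound assembles routinely.
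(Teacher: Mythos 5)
Your overall strategy is the same as the paper's: split the edges of $\mathcal{G}_l\blacktriangledown\mathcal{G}_m$ into the four symmetry classes, exploit diameter $\le 3$, compute $\mathrm{n}_u$ and $\mathrm{n}_v$ blockwise, and apply the triangle inequality before doubling. Your classes (a), (b) and (d) agree with the paper's \eqref{indu1}, \eqref{indu2} and the final remark: $irr(\mathcal{G}_l)$ per copy from the $\mathcal{G}_l$-edges, $2\,irr(\mathcal{G}_m)$ per copy from the $\mathcal{G}_m$-edges (your distance-$2$/distance-$3$ case split on the opposite copy is exactly what doubles the degree term), and $0$ from the matching edges.

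The gap is in class (c), which you declare ``assembles routinely'' without carrying it out. Your list of new phenomena --- the block $V(\mathcal{G}_l^2)$ and the single vertex $\hat{u}_m$ --- is incomplete: you must also classify the remaining $s_2-1$ vertices $\hat{w}$ of the opposite $\mathcal{G}_m$-copy relative to the edge $e=uu_m$. Each such $\hat{w}$ is at distance $2$ from $u$ (via the path $\hat{w},w,u$, since $w$ is joined to all of $V(\mathcal{G}_l^1)$) and at distance $2$ or $3$ from $u_m$ according as $w\sim u_m$ in $\mathcal{G}_m$ or not; hence the $s_2-1-\deg_{\mathcal{G}_m}(u_m)$ non-neighbours are strictly closer to $u$. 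Including them gives $\mathrm{n}_{u}(e)=2s_2-1-2\deg_{\mathcal{G}_m}(u_m)$, not $s_2-\deg_{\mathcal{G}_m}(u_m)$ (one can sanity-check this against $\mathrm{n}_u+\mathrm{n}_{u_m}+\#\{\text{equidistant}\}=2(s_1+s_2)$), and the triangle inequality then yields $2s_1s_2\left|s_2-s_1-1\right|+4s_1t_2+2s_2t_1$ for this class rather than the $s_1s_2\left|s_2-2s_1-1\right|+2(s_2t_1+s_1t_2)$ you assert. The paper's own \eqref{indu3} takes $\mathrm{n}_{\mathfrak{u}_l}=s_2-\deg_{\mathcal{G}_m}(\mathfrak{u}_m)$, i.e.\ it silently discards the opposite $\mathcal{G}_m$-copy for this edge class, which is the only reason it lands on the stated bound. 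So the one step you wave through is precisely the step that does not work: completing the tabulation honestly changes the class-(c) term and hence the theorem's right-hand side, and you should either redo that count and adjust the statement or justify explicitly why the opposite copy may be ignored (it cannot be).
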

\begin{proof}
For an edge $\mathfrak{u}_l\mathfrak{u'}_l$ of a graph $\mathcal{G}_l$, let $N_{\mathcal{G}_l}(\mathfrak{u}_l\mathfrak{u'}_l)$ be the set of common neighbors of $\mathfrak{u}_l$ and $\mathfrak{u'}_l$.
\begin{eqnarray}\label{indu}
\begin{split}
Mo(\mathcal{G}_l\blacktriangledown \mathcal{G}_m)&=2\bigg(\sum\limits_{\mathfrak{u}_l\mathfrak{u'}_l\in \mathsf{E}(\mathcal{G}_l)}\left|\mathrm{n}_{\mathfrak{u}_l}(\mathfrak{u}_l\mathfrak{u'}_l|\mathcal{G}_l
\blacktriangledown \mathcal{G}_m)-\mathrm{n}_{\mathfrak{u'}_l}(\mathfrak{u}_l\mathfrak{u'}_l|\mathcal{G}_l\blacktriangledown \mathcal{G}_m)\right|
\\
&+\sum\limits_{\mathfrak{u}_m\mathfrak{u'}_m\in \mathsf{E}(\mathcal{G}_m)}\left|\mathrm{n}_{\mathfrak{u}_m}(\mathfrak{u}_m\mathfrak{u'}_m|\mathcal{G}_l
\blacktriangledown \mathcal{G}_m)-\mathrm{n}_{\mathfrak{u'}_m}(\mathfrak{u}_m\mathfrak{u'}_m|\mathcal{G}_l\blacktriangledown \mathcal{G}_m)\right|
\\
&+\sum\limits_{\mathfrak{u}_l\in \mathsf{V}(\mathcal{G}_l)}\sum\limits_{\mathfrak{u}_m\in \mathsf{V}(\mathcal{G}_m)}\left|\mathrm{n}_{\mathfrak{u}_l}(\mathfrak{u}_l\mathfrak{u}_m|\mathcal{G}_l
\blacktriangledown \mathcal{G}_m)-\mathrm{n}_{\mathfrak{u}_m}(\mathfrak{u}_l\mathfrak{u}_m|\mathcal{G}_l\blacktriangledown \mathcal{G}_m)\right|\bigg)
\\
&+\sum\limits_{\mathfrak{u}_m\in \mathsf{V}(\mathcal{G}_m)\mathfrak{u''}_m\in \mathsf{V}(\mathcal{G}_m)}
\left|\mathrm{n}_{\mathfrak{u}_m}(\mathfrak{u}_m\mathfrak{u''}_m|\mathcal{G}_l\blacktriangledown \mathcal{G}_m)-\mathrm{n}_{\mathfrak{u''}_m}(\mathfrak{u}_m\mathfrak{u''}_m|\mathcal{G}_l\blacktriangledown \mathcal{G}_m)\right|.
\end{split}
\end{eqnarray}
Since the Indu-Bala product of two graphs has diameter at most $3$.
Observe that if $\mathfrak{u}_l\mathfrak{u'}_l\in \mathsf{E}(\mathcal{G}_l)$ then we have $\mathrm{n}_{\mathfrak{u}_l}(\mathfrak{e}=\mathfrak{u}_l\mathfrak{u'}_l|\mathcal{G}_l\blacktriangledown \mathcal{G}_m)=\deg_{\mathcal{G}_l}(\mathfrak{u}_l)-|N_{\mathcal{G}_l}(\mathfrak{u}_l\mathfrak{u'}_l)|$ and $\mathrm{n}_{\mathfrak{u'}_l}(\mathfrak{e}=\mathfrak{u}_l\mathfrak{u'}_l|\mathcal{G}_l\blacktriangledown \mathcal{G}_m)=\deg_{\mathcal{G}_l}(\mathfrak{u'}_l)-|N_{\mathcal{G}_l}(\mathfrak{u}_l\mathfrak{u'}_l)|$.
Analogous if $\mathfrak{u}_m\mathfrak{u'}_m\in \mathsf{E}(\mathcal{G}_m)$ then we have $\mathrm{n}_{\mathfrak{u}_m}(\mathfrak{e}=\mathfrak{u}_m\mathfrak{u'}_m|\mathcal{G}_l\blacktriangledown \mathcal{G}_m)=2\deg_{\mathcal{G}_m}(\mathfrak{u}_m)-2|N_{\mathcal{G}_m}(\mathfrak{u}_m\mathfrak{u'}_m)|$ and $\mathrm{n}_{\mathfrak{u'}_m}(\mathfrak{e}=\mathfrak{u}_m\mathfrak{u'}_m|\mathcal{G}_l\blacktriangledown \mathcal{G}_m)=2\deg_{\mathcal{G}_m}(\mathfrak{u'}_m)-2|N_{\mathcal{G}_m}(\mathfrak{u}_m\mathfrak{u'}_m)|$.
Therefore
\begin{eqnarray}\label{indu1}
\begin{split}
&\sum\limits_{\mathfrak{u}_l\mathfrak{u'}_l\in \mathsf{E}(\mathcal{G}_l)}\left|\mathrm{n}_{\mathfrak{u}_l}(\mathfrak{u}_l\mathfrak{u'}_l|\mathcal{G}_l
\blacktriangledown \mathcal{G}_m)-\mathrm{n}_{\mathfrak{u'}_l}(\mathfrak{u}_l\mathfrak{u'}_l|\mathcal{G}_l\blacktriangledown \mathcal{G}_m)\right|
\\
=&\sum\limits_{\mathfrak{u}_l\mathfrak{u'}_l\in \mathsf{E}(\mathcal{G}_l)}\left|\deg_{\mathcal{G}_l}(\mathfrak{u}_l)-|N_{\mathcal{G}_l}(\mathfrak{u}_l
\mathfrak{u'}_l)|-\deg_{\mathcal{G}_l}(\mathfrak{u'}_l)+|N_{\mathcal{G}_l}(\mathfrak{u}_l\mathfrak{u'}_l)|\right|
\\
=&irr(\mathcal{G}_l).
\end{split}
\end{eqnarray}
Similarly
\begin{eqnarray}\label{indu2}
\begin{split}
&\sum\limits_{\mathfrak{u}_m\mathfrak{u'}_m\in \mathsf{E}(\mathcal{G}_m)}\left|\mathrm{n}_{\mathfrak{u}_m}(\mathfrak{u}_m\mathfrak{u'}_m|\mathcal{G}_l
\blacktriangledown \mathcal{G}_m)-\mathrm{n}_{\mathfrak{u'}_m}(\mathfrak{u}_m\mathfrak{u'}_m|\mathcal{G}_l\blacktriangledown \mathcal{G}_m)\right|
\\
=&\sum\limits_{\mathfrak{u}_m\mathfrak{u'}_m\in \mathsf{E}(\mathcal{G}_m)}\left|2\deg_{\mathcal{G}_m}(\mathfrak{u}_m)-2|N_{\mathcal{G}_m}(\mathfrak{u}_m
\mathfrak{u'}_m)|-2\deg_{\mathcal{G}_m}(\mathfrak{u'}_m)+2|N_{\mathcal{G}_m}(\mathfrak{u}_m\mathfrak{u'}_m)|\right|
\\
=&2irr(\mathcal{G}_m).
\end{split}
\end{eqnarray}

Also if $\mathfrak{u}_l\in \mathsf{V}(\mathcal{G}_l)$ and $\mathfrak{u}_m\in \mathsf{V}(\mathcal{G}_m)$ then we have $\mathrm{n}_{\mathfrak{u}_l}(\mathfrak{e}=\mathfrak{u}_l\mathfrak{u}_m|\mathcal{G}_l\blacktriangledown \mathcal{G}_m)=s_2-\deg_{\mathcal{G}_m}(\mathfrak{u}_m)$ and $\mathrm{n}_{\mathfrak{u}_m}(\mathfrak{e}=\mathfrak{u}_l\mathfrak{u}_m|\mathcal{G}_l\blacktriangledown \mathcal{G}_m)=2s_1-\deg_{\mathcal{G}_l}(\mathfrak{u}_l)+1$.
Therefore
\begin{eqnarray}\label{indu3}
\begin{split}
&\sum\limits_{\mathfrak{u}_l\in \mathsf{V}(\mathcal{G}_l)}\sum\limits_{\mathfrak{u}_m\in \mathsf{V}(\mathcal{G}_m)}\left|\mathrm{n}_{\mathfrak{u}_l}(\mathfrak{u}_l\mathfrak{u}_m|\mathcal{G}_l
\blacktriangledown \mathcal{G}_m)-\mathrm{n}_{\mathfrak{u}_m}(\mathfrak{u}_l\mathfrak{u}_m|\mathcal{G}_l\blacktriangledown \mathcal{G}_m)\right|
\\
=&\sum\limits_{\mathfrak{u}_l\in \mathsf{V}(\mathcal{G}_l)}\sum\limits_{\mathfrak{u}_m\in \mathsf{V}(\mathcal{G}_m)}\left|s_2-\deg_{\mathcal{G}_m}(\mathfrak{u}_m)-2s_1+\deg_{\mathcal{G}_l}(\mathfrak{u}_l)
-1\right|
\\
\leq&\sum\limits_{\mathfrak{u}_l\in \mathsf{V}(\mathcal{G}_l)}\sum\limits_{\mathfrak{u}_m\in \mathsf{V}(\mathcal{G}_m)}\left|s_2-2s_1-1\right|+\sum\limits_{\mathfrak{u}_l\in \mathsf{V}(\mathcal{G}_l)}\sum\limits_{\mathfrak{u}_m\in \mathsf{V}(\mathcal{G}_m)}\deg_{\mathcal{G}_m}(\mathfrak{u}_m)
+\sum\limits_{\mathfrak{u}_l\in \mathsf{V}(\mathcal{G}_l)}\sum\limits_{\mathfrak{u}_m\in \mathsf{V}(\mathcal{G}_m)}\deg_{\mathcal{G}_l}(\mathfrak{u}_l)
\\
=&s_1s_2\left|s_2-2s_1-1\right|+2(s_2t_1+s_1t_2).
\end{split}
\end{eqnarray}

Finally, if $\mathfrak{u}_m\in V(\mathcal{G}_m)$ and $\mathfrak{u''}_m$ in the copy of  $\mathcal{G}_m$ then we have $\mathrm{n}_{\mathfrak{u}_m}(\mathfrak{e}=\mathfrak{u}_m\mathfrak{u''}_m|\mathcal{G}_l\blacktriangledown \mathcal{G}_m)=\mathrm{n}_{\mathfrak{u''}_m}(\mathfrak{e}=\mathfrak{u}_m\mathfrak{u''}_m|\mathcal{G}_l
\blacktriangledown \mathcal{G}_m)$.
Therefore by using above results \eqref{indu1}-\eqref{indu3} in \eqref{indu}, we obtain the required expression for $Mo(\mathcal{G}_l\blacktriangledown \mathcal{G}_m)$.
\end{proof}

\begin{example}
\begin{enumerate}
  \item $Mo(\mathcal{P}_g\blacktriangledown \mathcal{P}_h)=2(6+gh(|h-2g-1|+4)-2(g+h)).$
  \item $Mo(\mathcal{P}_g\blacktriangledown \mathcal{C}_h)=2(2+gh(|h-2g-1|+4)-2h).$
  \item $Mo(\mathcal{C}_g\blacktriangledown \mathcal{P}_h)=2(4+gh|h-2g-1|+2g(2h-1)).$
\end{enumerate}
\end{example}
\section{Subdivision vertex-edge join of three graphs}
Very recently, a novel graph operation has been introduced by Wen et al. in \cite{31},  known as the subdivision vertex-edge join.
For a graph $\mathcal{G}_{l_1}$, $\mathsf{S}(\mathcal{G}_{l_1})$ denotes its subdividing graph, whose vertex set has two parts, one the primary vertices $\mathsf{V}(\mathcal{G}_{l_1})$, another, symbolized by $\mathsf{I}(\mathcal{G}_{l_1})$, the inserting vertices that are end points of $E(\mathcal{G}_{l_1})$.
Let $\mathcal{G}_{l_2}$ and $\mathcal{G}_{l_3}$ be the other two disjoint graphs.
The subdivision vertex-edge join of $\mathcal{G}_{l_1}$ with $\mathcal{G}_{l_2}$ and $\mathcal{G}_{l_3}$, denoted by $\mathcal{G}_{l_1}^{\mathsf{S}}\rhd(\mathcal{G}_{l_2}^{\mathsf{V}}\cup \mathcal{G}_{l_3}^{\mathsf{I}})$, is the graph consisting of of $\mathsf{S}(\mathcal{G}_{l_1})$, $\mathcal{G}_{l_2}$ and $\mathcal{G}_{l_3}$, all vertex-disjoint, and connecting the $g$-th vertex of $\mathsf{V}(\mathcal{G}_{l_1})$ to each vertex in $\mathsf{V}(\mathcal{G}_{l_2})$ and $g$-th vertex of $\mathsf{I}(\mathcal{G}_{l_1}$) to every vertex in $\mathsf{V}(\mathcal{G}_{l_3})$.
It can be saw that $\mathcal{G}_{l_1}^{\mathsf{S}}\rhd(\mathcal{G}_{l_2}^{\mathsf{V}}\cup \mathcal{G}_{l_3}^{\mathsf{I}})$ is $\mathcal{G}_{l_1}\dot{\vee}\mathcal{G}_{l_2}$ (is attained from $\mathsf{S}(\mathcal{G}_{l_1})$ and $\mathcal{G}_{l_2}$ by connecting each vertex of $\mathsf{V}(\mathcal{G}_{l_1})$ to every vertex of $\mathsf{V}(\mathcal{G}_{l_2})$ \cite{36}) if $\mathcal{G}_{l_3}$ is the null graph, and is $\mathcal{G}_{l_1}\underline{\vee}\mathcal{G}_{l_3}$ (is attained from $\mathsf{S}(\mathcal{G}_{l_1})$ and $\mathcal{G}_{l_3}$ by joining each vertex of $\mathsf{E}(\mathcal{G}_{l_1})$ to every vertex of $\mathsf{V}(\mathcal{G}_{l_3})$ \cite{32}) if $\mathcal{G}_{l_2}$ is the null graph.

Now, we give the expression for Mostar index of subdivision vertex-edge join of $\mathcal{G}_{l_1}$, $\mathcal{G}_{l_2}$ and $\mathcal{G}_{l_3}$.
\begin{theorem}
Let $G$ and $H$ be the two graphs. Then
\begin{eqnarray*}
\begin{split}
Mo(\mathcal{G}_{l_1}^\mathsf{S}\vartriangleright(\mathcal{G}_{l_2}^\mathsf{V}\cup \mathcal{G}_{l_3}^\mathsf{I}))&\leq irr(\mathcal{G}_{l_2})+irr(\mathcal{G}_{l_3})|+s_1s_2|s_2+s_3-s_1-t_1|+4t_1s_2+2s_1t_2
\\
&+t_1s_3|s_3+s_2-s_1+4|+2t_3t_1+s_1t_1|s_2+s_1-s_3-t_1-4|+4t_1^2.
\end{split}
\end{eqnarray*}
\end{theorem}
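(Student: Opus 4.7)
The plan is to split the edge set of $\mathcal{G}_{l_1}^{\mathsf{S}}\vartriangleright(\mathcal{G}_{l_2}^{\mathsf{V}}\cup \mathcal{G}_{l_3}^{\mathsf{I}})$ into five disjoint classes and bound the Mostar contribution of each separately, exactly in parallel with the strategy used in Theorems \ref{theoremjoin} and \ref{theoremindu}. The five classes are: the $t_2$ edges of $\mathcal{G}_{l_2}$, the $t_3$ edges of $\mathcal{G}_{l_3}$, the $s_1s_2$ join-edges between $\mathsf{V}(\mathcal{G}_{l_1})$ and $\mathsf{V}(\mathcal{G}_{l_2})$, the $t_1s_3$ join-edges between $\mathsf{I}(\mathcal{G}_{l_1})$ and $\mathsf{V}(\mathcal{G}_{l_3})$, and the $2t_1$ subdivided edges of $\mathsf{S}(\mathcal{G}_{l_1})$. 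The four isolated constants to be verified first are the diameters: a vertex outside $\mathsf{V}(\mathcal{G}_{l_2})$ reaches every primary in one step, every inserting vertex in two steps, and every vertex of $\mathsf{V}(\mathcal{G}_{l_3})$ in three steps, with the analogous statement holding for $\mathsf{V}(\mathcal{G}_{l_3})$.

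For an edge $\mathfrak{e}=\mathfrak{u}_m\mathfrak{u'}_m\in \mathsf{E}(\mathcal{G}_{l_2})$ the distance observation above forces every vertex outside $\mathsf{V}(\mathcal{G}_{l_2})$ to be equidistant from the two endpoints, so as in the proof of Theorem \ref{theoremjoin} the common-neighbour term cancels and this class contributes exactly $irr(\mathcal{G}_{l_2})$. The same argument applied symmetrically to $\mathsf{V}(\mathcal{G}_{l_3})$-edges gives $irr(\mathcal{G}_{l_3})$.

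For the remaining three classes I would compute $\mathrm{n}_{u}(\mathfrak{e}\,|\,\cdot)$ and $\mathrm{n}_{v}(\mathfrak{e}\,|\,\cdot)$ block by block across $\mathsf{V}(\mathcal{G}_{l_1}), \mathsf{I}(\mathcal{G}_{l_1}), \mathsf{V}(\mathcal{G}_{l_2}), \mathsf{V}(\mathcal{G}_{l_3})$. For a join-edge $\mathfrak{u}_l\mathfrak{u}_m$ with $\mathfrak{u}_l\in \mathsf{V}(\mathcal{G}_{l_1})$ and $\mathfrak{u}_m\in \mathsf{V}(\mathcal{G}_{l_2})$, the count closer to $\mathfrak{u}_l$ involves the incident inserting vertices, the $\mathsf{V}(\mathcal{G}_{l_3})$-vertices reachable from them in one further step, and the $\mathcal{G}_{l_2}$-non-neighbours of $\mathfrak{u}_m$, while the count closer to $\mathfrak{u}_m$ involves the other primaries together with the $\mathcal{G}_{l_2}$-neighbours of $\mathfrak{u}_m$. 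Subtracting, pulling out the constant $s_2+s_3-s_1-t_1$, and bounding the degree corrections through $|x+y|\le|x|+|y|$ together with the handshake identities $\sum \deg_{\mathcal{G}_{l_2}}=2t_2$ and $\sum \deg_{\mathcal{G}_{l_1}}=2t_1$, produces the block $s_1s_2|s_2+s_3-s_1-t_1|+4t_1s_2+2s_1t_2$. The $\mathsf{I}(\mathcal{G}_{l_1})$-$\mathsf{V}(\mathcal{G}_{l_3})$ class is treated identically, with the degree of an inserting vertex in $\mathsf{S}(\mathcal{G}_{l_1})$ forced to be $2$; this accounts for the $+4$ inside the absolute value and for the term $2t_3t_1$. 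The subdivided edges $\mathfrak{u}_l i$ of $\mathsf{S}(\mathcal{G}_{l_1})$ are treated by combining both viewpoints, yielding the last block $s_1t_1|s_2+s_1-s_3-t_1-4|+4t_1^2$, where the $4t_1^2$ arises from $\sum_{i\in \mathsf{I}(\mathcal{G}_{l_1})}\deg_{\mathsf{S}(\mathcal{G}_{l_1})}(i)\cdot t_1 = 4t_1^2$. Adding the five contributions gives the claimed upper bound.

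The main obstacle is the accurate distance bookkeeping across the subdivision layer: one must verify, for each pair of endpoints, exactly which inserting vertices and which $\mathsf{V}(\mathcal{G}_{l_3})$-vertices sit in the "closer" set, since this is where the $+4$ correction and the sign reversal inside the absolute value actually come from. Once this per-block census is set up uniformly, the algebraic step reduces to repeated application of the triangle inequality and the two handshake counts, mirroring the end-game of Theorems \ref{theoremcor}, \ref{theoremjoin} and \ref{theoremindu}.
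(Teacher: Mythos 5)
Your proposal follows essentially the same route as the paper: the same five-way partition of the edge set (edges of $\mathcal{G}_{l_2}$, edges of $\mathcal{G}_{l_3}$, the two families of join edges, and the subdivision edges of $\mathsf{S}(\mathcal{G}_{l_1})$), the same block-by-block computation of $\mathrm{n}_u(\mathfrak{e}|\cdot)$ using the small diameter, and the same finish via the triangle inequality and the handshake identities $\sum\deg_{\mathcal{G}_{l_i}}=2t_i$ with $\deg_{\mathsf{S}(\mathcal{G}_{l_1})}(i)=2$ for inserting vertices. The per-class contributions you identify match the paper's equations term for term, so the plan is correct and not materially different from the published argument.
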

\begin{proof}
For an edge $\mathfrak{u}_l\mathfrak{u'}_l$ of a graph $\mathcal{G}_l$, let $N_{\mathcal{G}_l}(\mathfrak{u}_l\mathfrak{u'}_l)$ be the set of common neighbors of $\mathfrak{u}_l$ and $\mathfrak{u'}_l$.
\begin{eqnarray}\label{sve}
\begin{split}
&Mo(\mathcal{G}_{l_1}^\mathsf{S}\vartriangleright(\mathcal{G}_{l_2}^\mathsf{V}\cup \mathcal{G}_{l_3}^\mathsf{I}))\\
&=\sum\limits_{\mathfrak{u}_{l_2}\mathfrak{u'}_{l_2}\in \mathsf{E}(\mathcal{G}_{l_2})}\left|\mathrm{n}_{\mathfrak{u}_{l_2}}(\mathfrak{u}_{l_2}\mathfrak{u'}_{l_2}|
(\mathcal{G}_{l_1}^\mathsf{S}\vartriangleright(\mathcal{G}_{l_2}^\mathsf{V}\cup \mathcal{G}_{l_3}^\mathsf{I}))
-\mathrm{n}_{\mathfrak{u'}_{l_2}}(\mathfrak{u}_{l_2}\mathfrak{u'}_{l_2}|(\mathcal{G}_{l_1}^\mathsf{S}
\vartriangleright(\mathcal{G}_{l_2}^\mathsf{V}\cup \mathcal{G}_{l_3}^\mathsf{I}))\right|
\\
&+\sum\limits_{\mathfrak{u}_{l_3}\mathfrak{u'}_{l_3}\in \mathsf{E}(\mathcal{G}_{l_3})}\left|\mathrm{n}_{\mathfrak{u}_{l_3}}(\mathfrak{u}_{l_3}\mathfrak{u'}_{l_3}
|(\mathcal{G}_{l_1}^\mathsf{S}\vartriangleright(\mathcal{G}_{l_2}^\mathsf{V}\cup \mathcal{G}_{l_3}^\mathsf{I}))-\mathrm{n}_{\mathfrak{u'}_{l_3}}(\mathfrak{u}_{l_3}\mathfrak{u'}_{l_3}
|(\mathcal{G}_{l_1}^\mathsf{S}\vartriangleright(\mathcal{G}_{l_2}^\mathsf{V}\cup \mathcal{G}_{l_3}^\mathsf{I}))\right|
\\
&+\sum\limits_{\mathfrak{u}_{l_1}\in \mathsf{V}(\mathcal{G}_{l_1})}\sum\limits_{\mathfrak{u}_{l_2}\in \mathsf{V}(\mathcal{G}_{l_2})}\left|\mathrm{n}_{\mathfrak{u}_{l_1}}(\mathfrak{u}_{l_1}\mathfrak{u}_{l_2}
|(\mathcal{G}_{l_1}^\mathsf{S}\vartriangleright(\mathcal{G}_{l_2}^\mathsf{V}\cup \mathcal{G}_{l_3}^\mathsf{I}))-\mathrm{n}_{\mathfrak{u}_{l_2}}(\mathfrak{u}_{l_1}\mathfrak{u}_{l_2}
|(\mathcal{G}_{l_1}^\mathsf{S}\vartriangleright(\mathcal{G}_{l_2}^\mathsf{V}\cup \mathcal{G}_{l_3}^\mathsf{I}))\right|
\\
&+\sum\limits_{\mathfrak{u}_{l_1}\in \mathsf{E}(\mathcal{G}_{l_1})}\sum\limits_{\mathfrak{u}_{l_3}\in \mathsf{V}(\mathcal{G}_{l_3})}\left|\mathrm{n}_{\mathfrak{u}_{l_1}}(\mathfrak{u}_{l_1}\mathfrak{u}_{l_3}
|(\mathcal{G}_{l_1}^\mathsf{S}\vartriangleright(\mathcal{G}_{l_2}^\mathsf{V}\cup \mathcal{G}_{l_3}^\mathsf{I}))-\mathrm{n}_{\mathfrak{u}_{l_3}}(\mathfrak{u}_{l_1}\mathfrak{u}_{l_3}
|(\mathcal{G}_{l_1}^\mathsf{S}\vartriangleright(\mathcal{G}_{l_2}^\mathsf{V}\cup \mathcal{G}_{l_3}^\mathsf{I}))\right|
\\
&+\sum\limits_{\mathfrak{u}_{l_1}\in \mathsf{V}(\mathcal{G}_{l_1})}\sum\limits_{\mathfrak{u'}_{l_1}\in \mathsf{E}(\mathcal{G}_{l_1})}\left|\mathrm{n}_{\mathfrak{u}_{l_1}}(\mathfrak{u}_{l_1}\mathfrak{u'}_{l_1}
|(\mathcal{G}_{l_1}^\mathsf{S}\vartriangleright(\mathcal{G}_{l_2}^\mathsf{V}\cup \mathcal{G}_{l_3}^\mathsf{I}))-\mathrm{n}_{\mathfrak{u'}_{l_1}}(\mathfrak{u}_{l_1}\mathfrak{u'}_{l_1}
|(\mathcal{G}_{l_1}^\mathsf{S}\vartriangleright(\mathcal{G}_{l_2}^\mathsf{V}\cup \mathcal{G}_{l_3}^\mathsf{I}))\right|.
\end{split}
\end{eqnarray}

Since the SVE of three graphs has diameter at most $3$.
Observe that if $\mathfrak{u}_{l_2}\mathfrak{u'}_{l_2}\in \mathsf{E}(\mathcal{G}_{l_2})$ then we have $\mathrm{n}_{\mathfrak{u}_{l_2}}(\mathfrak{e}=\mathfrak{u}_{l_2}\mathfrak{u'}_{l_2}|(\mathcal{G}_{l_1}^
\mathsf{S}\vartriangleright(\mathcal{G}_{l_2}^\mathsf{V}\cup \mathcal{G}_{l_3}^\mathsf{I}))
=\deg_{\mathcal{G}_{l_2}}(\mathfrak{u}_{l_2})-|N_{\mathcal{G}_{l_2}}(\mathfrak{u}_{l_2}\mathfrak{u'}_{l_2})|$ and $\mathrm{n}_{\mathfrak{u'}_{l_2}}(\mathfrak{e}=\mathfrak{u}_{l_2}\mathfrak{u'}_{l_2}|(\mathcal{G}_{l_1}^
\mathsf{S}\vartriangleright(\mathcal{G}_{l_2}^\mathsf{V}\cup \mathcal{G}_{l_3}^\mathsf{I}))=
\deg_{\mathcal{G}_{l_2}}(\mathfrak{u'}_{l_2})-|N_{\mathcal{G}_{l_2}}(\mathfrak{u}_{l_2}\mathfrak{u'}_{l_2})|$.
Analogous if $\mathfrak{u}_{l_3}\mathfrak{u'}_{l_3}\in \mathsf{E}(\mathcal{G}_{l_3})$ then we have $\mathrm{n}_{\mathfrak{u}_{l_3}}(\mathfrak{e}=\mathfrak{u}_{l_3}\mathfrak{u'}_{l_3}|(\mathcal{G}_{l_1}^
\mathsf{S}\vartriangleright(\mathcal{G}_{l_2}^\mathsf{V}\cup \mathcal{G}_{l_3}^\mathsf{I}))=
\deg_{\mathcal{G}_{l_3}}(\mathfrak{u}_{l_3})-|N_{\mathcal{G}_{l_3}}(\mathfrak{u}_{l_3}\mathfrak{u'}_{l_3})|$ and $\mathrm{n}_{\mathfrak{u'}_{l_3}}(\mathfrak{e}=\mathfrak{u}_{l_3}\mathfrak{u'}_{l_3}|(\mathcal{G}_{l_1}^\mathsf{S}
\vartriangleright(\mathcal{G}_{l_2}^\mathsf{V}\cup \mathcal{G}_{l_3}^\mathsf{I}))
=\deg_{\mathcal{G}_{l_3}}(\mathfrak{u'}_{l_3})-|N_{\mathcal{G}_{l_3}}(\mathfrak{u}_{l_3}\mathfrak{u'}_{l_3})|$.
Therefore
\begin{eqnarray}\label{sve1}
\begin{split}
&\sum\limits_{\mathfrak{u}_{l_2}\mathfrak{u'}_{l_2}\in \mathsf{E}(\mathcal{G}_{l_2})}\left|\mathrm{n}_{\mathfrak{u}_{l_2}}(\mathfrak{u}_{l_2}\mathfrak{u'}_{l_2}
|(\mathcal{G}_{l_1}^\mathsf{S}\vartriangleright(\mathcal{G}_{l_2}^\mathsf{V}\cup \mathcal{G}_{l_3}^\mathsf{I}))-\mathrm{n}_{\mathfrak{u'}_{l_2}}(\mathfrak{u}_{l_2}\mathfrak{u'}_{l_2}
|(\mathcal{G}_{l_1}^\mathsf{S}\vartriangleright(\mathcal{G}_{l_2}^\mathsf{V}\cup \mathcal{G}_{l_3}^\mathsf{I}))\right|
\\
=&\sum\limits_{\mathfrak{u}_{l_2}\mathfrak{u'}_{l_2}\in \mathsf{E}(\mathcal{G}_{l_2})}\left|\deg_{\mathcal{G}_{l_2}}(\mathfrak{u}_{l_2})-|N_{\mathcal{G}_{l_2}}
(\mathfrak{u}_{l_2}\mathfrak{u'}_{l_2})|-\deg_{\mathcal{G}_{l_2}}(\mathfrak{u'}_{l_2})+|N_{\mathcal{G}_{l_2}}
(\mathfrak{u}_{l_2}\mathfrak{u'}_{l_2})|\right|
\\
=&irr(\mathcal{G}_{l_2}).
\end{split}
\end{eqnarray}
Similarly
\begin{eqnarray}\label{sve2}
\begin{split}
&\sum\limits_{\mathfrak{u}_{l_3}\mathfrak{u'}_{l_3}\in \mathsf{E}(\mathcal{G}_{l_3})}\left|\mathrm{n}_{\mathfrak{u}_{l_3}}(\mathfrak{u}_{l_3}\mathfrak{u'}_{l_3}|
(\mathcal{G}_{l_1}^\mathsf{S}\vartriangleright(\mathcal{G}_{l_2}^\mathsf{V}\cup \mathcal{G}_{l_3}^\mathsf{I}))-\mathrm{n}_{\mathfrak{u'}_{l_3}}(\mathfrak{u}_{l_3}\mathfrak{u'}_{l_3}
|(\mathcal{G}_{l_1}^\mathsf{S}\vartriangleright(\mathcal{G}_{l_2}^\mathsf{V}\cup \mathcal{G}_{l_3}^\mathsf{I}))\right|
\\
=&\sum\limits_{\mathfrak{u}_{l_3}\mathfrak{u'}_{l_3}\in \mathsf{E}(\mathcal{G}_{l_3})}\left|\deg_{\mathcal{G}_{l_3}}(\mathfrak{u}_{l_3})-|N_{\mathcal{G}_{l_3}}
(\mathfrak{u}_{l_3}\mathfrak{u'}_{l_3})|-\deg_{\mathcal{G}_{l_3}}(\mathfrak{u'}_{l_3})+|N_{\mathcal{G}_{l_3}}
(\mathfrak{u}_{l_3}\mathfrak{u'}_{l_3})|\right|
\\
=&irr(\mathcal{G}_{l_3}).
\end{split}
\end{eqnarray}
Also if $\mathfrak{u}_{l_1}\in \mathsf{V}(\mathcal{G}_{l_1})$ and $\mathfrak{u}_{l_2}\in \mathsf{V}(\mathcal{G}_{l_2})$ then we have $\mathrm{n}_{\mathfrak{u}_{l_1}}(\mathfrak{e}=\mathfrak{u}_{l_1}\mathfrak{u}_{l_2}|\mathcal{G}_{l_1}^\mathsf{S}
\vartriangleright(\mathcal{G}_{l_2}^\mathsf{V}\cup \mathcal{G}_{l_3}^\mathsf{I}))
=s_2+s_3-\deg_{\mathcal{G}_{l_2}}(\mathfrak{u}_{l_2})+\deg_{\mathcal{G}_{l_1}}(\mathfrak{u}_{l_1})$ and $\mathrm{n}_{\mathfrak{u}_{l_2}}(\mathfrak{e}=\mathfrak{u}_{l_1}\mathfrak{u}_{l_2}|\mathcal{G}_{l_1}^\mathsf{S}
\vartriangleright(\mathcal{G}_{l_2}^\mathsf{V}\cup \mathcal{G}_{l_3}^\mathsf{I}))
=s_1-\deg_{\mathcal{G}_{l_1}}(\mathfrak{u}_{l_1})+t_1$.
Therefore
\begin{eqnarray}\label{sve3}
\begin{split}
&\sum\limits_{\mathfrak{u}_{l_1}\in \mathsf{V}(\mathcal{G}_{l_1})}\sum\limits_{\mathfrak{u}_{l_2}\in \mathsf{V}(\mathcal{G}_{l_2})}\left|\mathrm{n}_{\mathfrak{u}_{l_1}}(\mathfrak{u}_{l_1}\mathfrak{u}_{l_2}
|\mathcal{G}_{l_1}^\mathsf{S}\vartriangleright(\mathcal{G}_{l_2}^\mathsf{V}\cup \mathcal{G}_{l_3}^\mathsf{I}))-\mathrm{n}_{\mathfrak{u}_{l_2}}(\mathfrak{u}_{l_1}\mathfrak{u}_{l_2}
|\mathcal{G}_{l_1}^\mathsf{S}\vartriangleright(\mathcal{G}_{l_2}^\mathsf{V}\cup \mathcal{G}_{l_3}^\mathsf{I}))\right|
\\
&=\sum\limits_{\mathfrak{u}_{l_1}\in \mathsf{V}(\mathcal{G}_{l_1})}\sum\limits_{\mathfrak{u}_{l_2}\in \mathsf{V}(\mathcal{G}_{l_2})}\left|s_2+s_3-\deg_{\mathcal{G}_{l_2}}(\mathfrak{u}_{l_2})
+\deg_{\mathcal{G}_{l_1}}(\mathfrak{u}_{l_1})-s_1+\deg_{\mathcal{G}_{l_1}}(\mathfrak{u}_{l_1})-t_1\right|
\\
&\leq\sum\limits_{\mathfrak{u}_{l_1}\in \mathsf{V}(\mathcal{G}_{l_1})}\sum\limits_{\mathfrak{u}_{l_2}\in \mathsf{V}(\mathcal{G}_{l_2})}\left|s_2+s_3-s_1-t_1\right|+2\sum\limits_{\mathfrak{u}_{l_1}\in \mathsf{V}(\mathcal{G}_{l_1})}\sum\limits_{\mathfrak{u}_{l_2}\in \mathsf{V}(\mathcal{G}_{l_2})}\deg_{\mathcal{G}_{l_1}}(\mathfrak{u}_{l_1})\\
&+\sum\limits_{\mathfrak{u}_{l_1}\in \mathsf{V}(\mathcal{G}_{l_1})}\sum\limits_{\mathfrak{u}_{l_2}\in \mathsf{V}(\mathcal{G}_{l_2})}\deg_{\mathcal{G}_{l_2}}(\mathfrak{u}_{l_2})
\\
&=s_1s_2\left|s_2+s_3-s_1-t_1\right|+4t_1s_2+2s_1t_2.
\end{split}
\end{eqnarray}
Also if $\mathfrak{u}_{l_1}\in \mathsf{E}(\mathcal{G}_{l_1})$ and $\mathfrak{u}_{l_3}\in \mathsf{V}(\mathcal{G}_{l_3})$ then we have $\mathrm{n}_{\mathfrak{u}_{l_1}}(\mathfrak{e}=\mathfrak{u}_{l_1}\mathfrak{u}_{l_3}|\mathcal{G}_{l_1}^
\mathsf{S}\vartriangleright(\mathcal{G}_{l_2}^\mathsf{V}\cup \mathcal{G}_{l_3}^\mathsf{I}))
=s_3-\deg_{\mathcal{G}_{l_3}}(\mathfrak{u}_{l_3})+2+s_2$ and $\mathrm{n}_{\mathfrak{u}_{l_3}}(\mathfrak{e}=\mathfrak{u}_{l_1}\mathfrak{u}_{l_3}|\mathcal{G}_{l_1}^\mathsf{S}
\vartriangleright(\mathcal{G}_{l_2}^\mathsf{V}\cup \mathcal{G}_{l_3}^\mathsf{I}))=s_1+t_1-2$.
Therefore
\begin{eqnarray}\label{sve4}
\begin{split}
&\sum\limits_{\mathfrak{u}_{l_1}\in \mathsf{E}(\mathcal{G}_{l_1})}\sum\limits_{\mathfrak{u}_{l_3}\in \mathsf{V}(\mathcal{G}_{l_3})}\left|\mathrm{n}_{\mathfrak{u}_{l_1}}(\mathfrak{u}_{l_1}\mathfrak{u}_{l_3}
|\mathcal{G}_{l_1}^\mathsf{S}\vartriangleright(\mathcal{G}_{l_2}^\mathsf{V}\cup \mathcal{G}_{l_3}^\mathsf{I}))-\mathrm{n}_{\mathfrak{u}_{l_3}}(\mathfrak{u}_{l_1}\mathfrak{u}_{l_3}
|\mathcal{G}_{l_1}^\mathsf{S}\vartriangleright(\mathcal{G}_{l_2}^\mathsf{V}\cup \mathcal{G}_{l_3}^\mathsf{I}))\right|
\\
&=\sum\limits_{\mathfrak{u}_{l_1}\in \mathsf{E}(\mathcal{G}_{l_1})}\sum\limits_{\mathfrak{u}_{l_3}\in \mathsf{V}(\mathcal{G}_{l_3})}\left|s_2+s_3+2-\deg_{\mathcal{G}_{l_3}}(\mathfrak{u}_{l_3})-s_1-t_1+2\right|
\\
&\leq\sum\limits_{\mathfrak{u}_{l_1}\in \mathsf{E}(\mathcal{G}_{l_1})}\sum\limits_{\mathfrak{u}_{l_3}\in \mathsf{V}(\mathcal{G}_{l_3})}\left|s_2+s_3-s_1-t_1+4\right|+\sum\limits_{\mathfrak{u}_{l_1}\in \mathsf{E}(\mathcal{G}_{l_1})}\sum\limits_{\mathfrak{u}_{l_3}\in \mathsf{V}(\mathcal{G}_{l_3})}\deg_{\mathcal{G}_{l_3}}(\mathfrak{u}_{l_3})
\\
&=t_1s_3\left|s_2+s_3-s_1-t_1+4\right|+2t_1t_3.
\end{split}
\end{eqnarray}
Finally if $\mathfrak{u}_{l_1}\in \mathsf{V}(\mathcal{G}_{l_1})$ and $\mathfrak{u'}_{l_1}\in \mathsf{E}(\mathcal{G}_{l_1})$ then we have $\mathrm{n}_{\mathfrak{u}_{l_1}}(\mathfrak{e}=\mathfrak{u}_{l_1}\mathfrak{u'}_{l_1}|\mathcal{G}_{l_1}^
\mathsf{S}\vartriangleright(\mathcal{G}_{l_2}^\mathsf{V}\cup \mathcal{G}_{l_3}^\mathsf{I}))
=s_2+s_1-2+\deg_{\mathcal{G}_{l_1}}(\mathfrak{u'}_{l_1})$ and $\mathrm{n}_{\mathfrak{u'}_{l_1}}(\mathfrak{e}=\mathfrak{u}_{l_1}\mathfrak{u'}_{l_1}|\mathcal{G}_{l_1}^
\mathsf{S}\vartriangleright(\mathcal{G}_{l_2}^\mathsf{V}\cup \mathcal{G}_{l_3}^\mathsf{I}))=t_1+s_3+2-\deg_{\mathcal{G}_{l_1}}(\mathfrak{u}_{l_1})$.
Therefore
\begin{eqnarray}\label{sve5}
\begin{split}
&\sum\limits_{\mathfrak{u}_{l_1}\in \mathsf{V}(\mathcal{G}_{l_1})}\sum\limits_{\mathfrak{u'}_{l_1}\in \mathsf{E}(\mathcal{G}_{l_1})}\left|\mathrm{n}_{\mathfrak{u}_{l_1}}(\mathfrak{u}_{l_1}\mathfrak{u'}_{l_1}
|\mathcal{G}_{l_1}^\mathsf{S}\vartriangleright(\mathcal{G}_{l_2}^\mathsf{V}\cup \mathcal{G}_{l_3}^\mathsf{I}))-\mathrm{n}_{\mathfrak{u'}_{l_1}}(\mathfrak{u}_{l_1}\mathfrak{u'}_{l_1}
|\mathcal{G}_{l_1}^\mathsf{S}\vartriangleright(\mathcal{G}_{l_2}^\mathsf{V}\cup \mathcal{G}_{l_3}^\mathsf{I}))\right|
\\
&=\sum\limits_{\mathfrak{u}_{l_1}\in \mathsf{V}(\mathcal{G}_{l_1})}\sum\limits_{\mathfrak{u'}_{l_1}\in \mathsf{E}(\mathcal{G}_{l_1})}\left|s_2+s_1-2+\deg_{\mathcal{G}_{l_1}}(\mathfrak{u'}_{l_1})-t_1-s_3-2
+\deg_{\mathcal{G}_{l_1}}(\mathfrak{u}_{l_1})\right|
\\
&\leq\sum\limits_{\mathfrak{u}_{l_1}\in \mathsf{V}(\mathcal{G}_{l_1})}\sum\limits_{\mathfrak{u'}_{l_1}\in \mathsf{E}(\mathcal{G}_{l_1})}\left|s_2+s_1-s_3-t_1-4\right|+2\sum\limits_{\mathfrak{u}_{l_1}\in \mathsf{V}(\mathcal{G}_{l_1})}\sum\limits_{\mathfrak{u'}_{l_1}\in \mathsf{E}(\mathcal{G}_{l_1})}\deg_{\mathcal{G}_{l_1}}(\mathfrak{u}_{l_1})
\\
&=t_1s_1\left|s_2+s_1-s_3-t_1-4\right|+4t^2_1.
\end{split}
\end{eqnarray}
Therefore by using above results \eqref{sve1}-\eqref{sve5} in \eqref{sve}, we obtain the required expression for $Mo(\mathcal{G}_{l_1}^\mathsf{S}\vartriangleright(\mathcal{G}_{l_2}^\mathsf{V}\cup \mathcal{G}_{l_3}^\mathsf{I}))$.
\end{proof}

\section{Conclusion}

The present-day trend of the numerical coding of chemical structures with topological descriptors has proven quite successful in Bioinformatics and Chemistry.
This scheme yields the retrieval, mining, rapid collection, annotation, and comparison of chemical structures within large databases. Subsequently, topological descriptors can be applied to study for QSAR and QSPR, which are models, that correlate chemical structure with physical properties, biological activity or chemical reactivity. In this article, we have given the results for the Mostar index of corona product, Cartesian product, join, lexicographic product, Indu-Bala product and subdivision vertex-edge join of graphs and apply these outcomes to find the Mostar index of various classes of chemical graphs and nanostructures.

\section*{Conflict of Interests}
The authors hereby declare that there is no conflict of interests regarding the publication of this paper.

\section*{Acknowledgment}

We thank the reviewers for their constructive comments in improving the quality of this paper.

\end{document}